\newtheorem{theorem}{Theorem}[section]
\newtheorem{corollary}[theorem]{Corollary}
\newtheorem{example}[theorem]{Example}
\def\NN{\hbox{\sf I\kern-.13em\hbox{N}}}
\def\RR{\hbox{\sf I\kern-.14em\hbox{R}}}
\def\Cc{\hbox{\sf C\kern -.47em {\raise .48ex \hbox{$\scriptscriptstyle |$}}
   \kern-.5em {\raise .48ex \hbox{$\scriptscriptstyle |$}} }}
\newcommand{\be}{\begin{equation}}
\newcommand{\ee}{\end{equation}}
\newcommand{\cA}{{\mathcal A}}
\newcommand{\cF}{{\mathcal F}}
\newcommand{\ran}{{\rm ran}\,}
\begin{document}

\title{Triangularizability of Families of Polynomially Compact Operators}
\author{Roman Drnov\v sek, Marko Kandi\'{c}}
\date{\today}

\begin{abstract}
\baselineskip 6mm
A recent paper of Shemesh  shows triangularizability of a pair $\{A, B\}$ of complex matrices
satisfying the condition $A [A,B]=[A,B] B=0$, or equivalently, the matrices $A$ and $B$ commute with their product $A B$. In this paper we extend this result to polynomially compact operators on Banach spaces. The case when the underlying space is Hilbert and one of operators is normal is also studied. Furthermore, we consider families of polynomially compact operators whose iterated commutators of some fixed length are zero. We also obtain a structure result in the special case of a finite family of algebraic operators.
\end{abstract}

\maketitle

\noindent
{\it Math. Subj. Classification (2010)}: 47A15, 47A46, 47B07, 47B47, 16N20\\
{\it Key words}: Polynomially compact operators, Triangularizability, Jacobson radical, Commutator \\

\baselineskip 6.3mm

\section {Introduction}

The property that operators $A$ and $B$ commute with their commutator $[A,B] = A B - B A$ was studied extensively in the literature. For example, by a theorem of Kleinecke \cite{Kleinecke} and Shirokov \cite{Shirokov} the condition $[A,[A,B]]=0$ implies that the commutator $[A,B]$ is quasinilpotent. If $A$ and $B$ are algebraic elements of an associative algebra, then the assumption $[A,[A,B]]=0$ implies that $[A,B]$ is nilpotent. In the literature, this is known as Jacobson's lemma. The situation is even more interesting for operators on Hilbert spaces. Putnam \cite{Putnam} proved that whenever a normal operator $A$ on a Hilbert space commutes with a commutator $[A,B]$, then $A$ and $B$ commute. This result can be, in particular, applied to complex matrices. In this special case the assumption that $A$ is normal can be replaced with the weaker assumption that $A$ is diagonalizable. Shapiro  \cite[Theorem 10]{Shapiro} proved that in this case the condition $[A,[A,B]]=0$ together with diagonalizability of $A$ imply that $A$ and $B$ commute.

The main goal of this paper is to prove that families of polynomially compact operators, which are in some sense close to commutative ones, have plenty invariant subspaces. Probably the most known result of this type, which is proved by an application of Lomonosov's theorem, states that every commutative family of polynomially compact operators on a complex Banach space is triangularizable.
We will extend this result (see Theorem \ref{family_polynomially_compact}) to families of polynomially compact operators  with the property for which there exists a positive integer $n$ such that all iterated commutators of length at least $n$ are zero.  This result improves Murphy's result \cite{Murphy} which was proved only in the case for compact operators $A$ and $B$ which commute with their commutator.

On the other hand, the property that operators $A$ and $B$ commute with their product $AB$ was not studied so far in the literature in great extent.
A recent paper of Shemesh \cite{Shemesh} shows triangularizability of a pair $\{A, B\}$ of complex matrices satisfying the condition $A [A,B]=[A,B] B=0$, or equivalently, the matrices $A$ and $B$ commute with their product $A B$. We extend this result to polynomially compact operators on Banach spaces. We also consider algebras generated by finite families of algebraic operators.

This paper is organized as follows. In Section 2, we gather relevant definitions and notation needed throughout the text. In Section 3, we consider triangularizability of families of polynomially compact operators. We also study general algebraic properties of algebras generated by them. In Section 4, we extend the above-mentioned result of Shemesh to polynomially compact operators on a complex Banach space. We also consider a special case when the underlying space is a Hilbert space and one of the operators is normal.

\section {Preliminaries}

Let $V$ be a complex vector space.
By an {\it operator} on a vector space $V$ we mean a linear transformation from $V$ into itself.
An operator on $V$ is {\it scalar} if it is a scalar multiple of the identity operator $I$ on $V$.
Otherwise, it is called {\it nonscalar}.
The notation $[S,T]$ is used as an abbreviation for the commutator $S T - T S$,
where $S$ and $T$ are operators on $V$.
An operator $T$ on $V$ is said to be {\it algebraic} if there exists a nonzero complex polynomial $p$
such that the operator $p(T) = 0$. The monic polynomial of smallest degree with this property
is called the {\it minimal polynomial} of an algebraic operator $T$.
By $\deg p$ we denote the degree of a polynomial $p$.
An operator $T$ is said to be {\it  nilpotent} if $T^n = 0$ for some positive integer $n$.
The smallest $n$ with this property is called the {\it nilpotency index} of  $T$, and it is denoted by $n(T)$.

A subspace $M$ of $V$ is {\it invariant} under an operator $T$ on $V$
whenever $T (M) \subseteq M$.  A subspace $M$ of $V$ is {\it nontrivial} if $\{0\} \neq M \neq V$.
Let ${\cF}$ be a family of operators on $V$. A subspace $M$ of $V$ is {\it invariant} under ${\cF}$
if $M$ is invariant under every $T \in \cF$.
If, in addition, the subspace $M$ is invariant under every operator $S$ that commutes with all operators of $\cF$,
$M$ is said to be {\it hyperinvariant} under $\cF$. A family $\cF$ is {\it algebraically reducible} if there exists a nontrivial subspace invariant under $\cF$.
A family $\cF$ is {\it algebraically triangularizable} if there is a chain of invariant subspaces for $\cF$
which is maximal as a subspace chain.
When $V$ is a Banach space, we replace subspaces  by closed subspaces and operators by bounded operators
to obtain the corresponding definitions of {\it reducibility},  {\it triangularizability} and {\it hyperinvariance}.
For details, see the monograph \cite{RadRos}.

By $\mathcal B(X)$ we denote the Banach algebra of all bounded operators on a Banach space $X$,
and by $\mathcal K(X)$ its subalgebra of all compact operators.
An operator $T \in \mathcal B(X)$ is said to be {\it polynomially compact}
if there exists a nonzero complex polynomial $p$ such that the operator $p(T)$ is compact.
The monic polynomial of smallest degree with this property is called the {\it minimal polynomial} of a polynomially compact operator $T$. Note that an operator $T \in \mathcal B(X)$ is polynomially compact if and only if the canonical projection from $\mathcal B(X)$ to the Calkin algebra $\mathcal B(X)/\mathcal K(X)$ maps $T$ to an algebraic element.

The following well-known theorem of Lomonosov \cite{Lom} is essential for our results.

\begin {theorem} \label{Lomonosov}
A nonscalar operator on an infinite-dimensional complex Banach space which commutes with a nonzero compact operator has a nontrivial hyperinvariant closed subspace.
\end {theorem}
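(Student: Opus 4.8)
Since this is Lomonosov's classical theorem, the plan is to reproduce its standard proof via Schauder's fixed point theorem. I would argue by contradiction: assume the nonscalar operator $T$ has no nontrivial hyperinvariant closed subspace, and set $\cA := \{S \in \mathcal B(X) : ST = TS\}$. This $\cA$ is a unital subalgebra of $\mathcal B(X)$ containing $T$ and the nonzero compact operator $K$, and a subspace is hyperinvariant under $T$ precisely when it is invariant under $\cA$; thus $\cA$ has no nontrivial closed invariant subspace. Consequently, for every nonzero $x \in X$ the orbit $\cA x$ is dense in $X$, since its closure is a nonzero closed $\cA$-invariant subspace.

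Next I would carry out the fixed point construction. After rescaling $K$, I fix $x_0 \in X$ and $\rho > 0$ so that $0 \notin B$ and $0 \notin \overline{K(B)}$, where $B := \{x \in X : \|x - x_0\| \le \rho\}$; then $\Gamma := \overline{K(B)}$ is compact and avoids $0$. For each $y \in \Gamma$, density of $\cA y$ provides $A_y \in \cA$ with $A_y y$ in the open ball $\{z : \|z - x_0\| < \rho\}$, and hence an open neighbourhood $U_y$ of $y$ that $A_y$ maps into that ball. Choosing a finite subcover $U_1, \dots, U_n$ of $\Gamma$ with operators $A_1, \dots, A_n \in \cA$ and a subordinate continuous partition of unity $p_1, \dots, p_n$ on $\Gamma$, I define $\Phi : B \to X$ by $\Phi(x) = \sum_{i=1}^n p_i(Kx)\, A_i(Kx)$. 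Convexity of $B$ forces $\Phi(B) \subseteq B$; writing $\Phi = \Psi \circ K$ with $\Psi$ continuous on $\Gamma$ shows $\Phi(B)$ is relatively compact; so Schauder's fixed point theorem produces $x_1 \in B$ (hence $x_1 \ne 0$) with $\Phi(x_1) = x_1$.

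Finally I would harvest the invariant subspace. Put $C := \sum_{i=1}^n p_i(Kx_1)\, A_i K$; each $A_i K$ is compact and lies in $\cA$, so $C$ is a compact operator commuting with $T$ and satisfying $C x_1 = x_1$. Hence $1$ is a nonzero eigenvalue of $C$, so $N := \ker(C - I)$ is a nonzero finite-dimensional subspace; it is invariant under $T$ because $C$ commutes with $T$, so $T|_N$ has an eigenvalue $\lambda$, and $M := \ker(T - \lambda I)$ is nonzero. A one-line computation shows $M$ is hyperinvariant under $T$, and $M \ne X$ because $T$ is nonscalar, contradicting the assumption; this completes the proof.

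The step I expect to be the main obstacle is the construction and verification of the self-map $\Phi$: one must choose the normalization so that the origin avoids both $B$ and $\overline{K(B)}$, use transitivity of $\cA$ to cover the compact set $\Gamma$, and combine the partition of unity with the compactness of $K$ so that $\Phi$ becomes a continuous self-map of $B$ with relatively compact image, exactly as Schauder's theorem requires. Extracting from the fixed point a genuine compact operator in $\cA$ with eigenvalue $1$, and then a genuine eigenvalue of $T$, is the other point that needs care, since it is what turns the analytic fixed point into a hyperinvariant subspace.
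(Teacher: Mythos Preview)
Your sketch is a faithful and correct outline of Lomonosov's original Schauder fixed-point argument, and I see no gaps in it. Note, however, that the paper does not actually prove this theorem: it is stated as the well-known result of Lomonosov with a citation to \cite{Lom} and no proof is given. So there is nothing to compare against; your proposal simply supplies the classical proof that the paper takes for granted.
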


An application of Theorem \ref{Lomonosov} gives the following result (see, e.g., \cite{konva}).

\begin {corollary}  \label{Konvalinka}
A nonscalar polynomially compact operator on a complex Banach space
has a nontrivial hyperinvariant closed subspace.
\end {corollary}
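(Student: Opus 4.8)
The plan is to proceed by a dichotomy on whether $T$ is an algebraic operator. This split is forced on us because Theorem \ref{Lomonosov} can only be applied once we have produced a \emph{nonzero} compact operator commuting with $T$; if $T$ is algebraic, it is conceivable that the minimal polynomial $p$ of $T$ \emph{as a polynomially compact operator} satisfies $p(T)=0$, so that the only compact operator we obtain for free is the zero operator and Lomonosov's theorem is vacuous. The algebraic case will instead be handled by hand via the minimal polynomial of $T$, and it is this case (which in particular subsumes all finite-dimensional spaces) that I expect to be the only place requiring care.

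First I would dispose of the generic situation in which $T$ is \emph{not} algebraic. Let $p$ be the minimal polynomial of the polynomially compact operator $T$, so that $p(T)$ is compact. Since $T$ is not algebraic, $p(T)\neq 0$, hence $p(T)$ is a nonzero compact operator; being a polynomial in $T$, it commutes with $T$. Moreover the underlying space is automatically infinite-dimensional, since every operator on a finite-dimensional space is algebraic. As $T$ is nonscalar, Theorem \ref{Lomonosov} applied to $T$ and to the nonzero compact operator $p(T)$ produces a nontrivial hyperinvariant closed subspace for $T$, as desired.

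It remains to treat the case where $T$ is algebraic; let $m$ be its minimal polynomial. Since $T$ is nonscalar we have $\deg m\geq 2$. Pick a root $\lambda$ of $m$ and write $m(z)=(z-\lambda)^k h(z)$ with $h(\lambda)\neq 0$ and $k\geq 1$, and set $M:=\ker(T-\lambda I)$. Then $M$ is closed as the kernel of a bounded operator, and $M\neq X$ because $T\neq\lambda I$. To see that $M\neq\{0\}$, note that $q(z):=(z-\lambda)^{k-1}h(z)$ has degree strictly less than $\deg m$, so minimality of $m$ gives $q(T)\neq 0$; choosing $x$ with $q(T)x\neq 0$ we get $(T-\lambda I)\,q(T)x=m(T)x=0$, so $q(T)x$ is a nonzero element of $M$. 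Hence $M$ is nontrivial. Finally, $M$ is hyperinvariant: any operator $S$ commuting with $T$ also commutes with the polynomial $T-\lambda I$, and therefore maps $\ker(T-\lambda I)$ into itself. Thus in either case $T$ admits a nontrivial hyperinvariant closed subspace. The substance of the proof is entirely concentrated in Theorem \ref{Lomonosov}; the only genuinely delicate point is verifying, in the algebraic case, that the eigenspace $\ker(T-\lambda I)$ is both nonzero (because $\lambda$ lies in the spectrum of $T$) and proper (because $T$ is nonscalar).
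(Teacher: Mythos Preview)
Your argument is correct and is exactly the standard fleshing-out of the one-line justification the paper gives (the paper merely says the result follows from Theorem~\ref{Lomonosov} and cites \cite{konva} for details). The dichotomy you use---Lomonosov for the non-algebraic case, and an eigenspace of the minimal polynomial for the algebraic case---is the expected route and matches what the paper has in mind.
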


Corollary \ref{Konvalinka} and the Triangularization Lemma (see, e.g., \cite[Lemma 7.1.11]{RadRos}) imply the following
triangularizability result.

\begin {corollary}  \label{commutative}
Every commutative family of polynomially compact operators is triangularizable.
\end {corollary}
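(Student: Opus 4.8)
The plan is to deduce the statement from the Triangularization Lemma. That lemma reduces triangularizability of a family $\cF$ to two facts: first, that the class of commutative families of polynomially compact operators is stable under passing to the family induced on a quotient $N/M$, where $M \subseteq N$ is a pair of $\cF$-invariant closed subspaces; and second, that every family in this class acting on a Banach space of dimension greater than one is reducible, i.e.\ has a nontrivial closed invariant subspace. Once these are established, the lemma yields a chain of $\cF$-invariant closed subspaces that is maximal as a chain of closed subspaces, which is precisely a triangularizing chain for $\cF$.

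For the first fact, let $\cF$ be commutative with every member polynomially compact, and let $M \subseteq N$ be closed $\cF$-invariant subspaces. The operators induced on $N/M$ plainly still commute, so only polynomial compactness needs to be checked. Fix $T \in \cF$ and a nonzero polynomial $p$ with $p(T)$ compact. Since $M$ and $N$ are $T$-invariant, the restriction $p(T)|_N$ is compact on $N$, and the operator it induces on $N/M$ is $p(\widehat{T})$, where $\widehat{T}$ is the operator induced by $T$; this induced operator is again compact because the quotient map $N \to N/M$ is bounded and carries relatively compact sets onto relatively compact sets. Hence $\widehat{T}$ is polynomially compact, and the induced family again lies in the class.

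For the second fact, let $Y$ be a Banach space with $\dim Y > 1$ carrying a commutative family $\cF$ of polynomially compact operators. If every operator in $\cF$ is scalar, then any nontrivial proper closed subspace of $Y$ is $\cF$-invariant, and such a subspace exists since $\dim Y > 1$ (a line if $Y$ is finite-dimensional, the closed span of a single nonzero vector if $Y$ is infinite-dimensional). Otherwise some $T \in \cF$ is nonscalar and polynomially compact, so by Corollary \ref{Konvalinka} it has a nontrivial hyperinvariant closed subspace $L$; since every operator in $\cF$ commutes with $T$, the subspace $L$ is invariant under all of $\cF$. In either case $\cF$ is reducible.

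Applying the Triangularization Lemma then completes the proof. I expect the only step that is not entirely formal to be the verification that polynomial compactness is inherited by the induced operators on a quotient — concretely, identifying the operator induced by $p(T)$ with $p(\widehat{T})$ and checking its compactness — while the reducibility half is an immediate consequence of Corollary \ref{Konvalinka} together with the definition of hyperinvariance.
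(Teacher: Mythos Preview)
Your proof is correct and follows the same route as the paper: the paper simply states that the result follows from Corollary \ref{Konvalinka} together with the Triangularization Lemma, and you have unpacked exactly what that entails (heredity of the property to quotients $N/M$, and reducibility via a hyperinvariant subspace of a nonscalar member).
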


Let $\mathcal F$ be a family of operators on $V$. Define ${\mathcal F}^{[0]} = \mathcal F$, and
${\mathcal F}^{[k]} = \{ [A, B] : A \in {\mathcal F}^{[k-1]}, B\in  \mathcal F \}$ for $k =1, 2, \ldots$.
If ${\mathcal F}^{[k]}=\{0\}$ for some $k \in \mathbb N$, the family $\cF$ is said to be {\it L-nilpotent}. In particular, if ${\mathcal F}^{[1]}=\{0\}$, then operators from $\mathcal F$ commute.
Note that every nilpotent Lie algebra is L-nilpotent.
If there exists $k \in \mathbb N$ such that $T_1 T_2 \ldots T_k  = 0$
for every choice of $T_1$, $T_2$, $\ldots$, $T_k$ in $\cF$,  the family $\cF$ is said to be  {\it nilpotent}.
Clearly, if $\cF$ is a nilpotent family, then it is L-nilpotent as well.
We say that $\cF$ is {\it locally (L-)nilpotent} if every finite subset of $\cF$ is (L-)nilpotent. An algebra $\mathcal A$ is said to be {\it locally finite} if every finitely-generated subalgebra of $\mathcal A$ is finite-dimensional. The {\it Jacobson radical} of an algebra $\mathcal A$, denoted by $J(\mathcal A)$, is defined as the intersection of all primitive ideals of $\mathcal A$. If the algebra $\mathcal A$ is finite-dimensional, then $J(\mathcal A)$ is the unique maximal nilpotent ideal of $\mathcal A$. Although, in general, the algebra $\mathcal A$ does not contain maximal nilpotent ideals, $J(\mathcal A)$ contains left and right ideals of $\mathcal A$ which consist of nilpotent elements (see, e.g., \cite{Bresar}). \\

\section {L-nilpotent families}

By Corollary \ref{commutative}, every commutative family of polynomially compact operators is triangularizable. We extend this result to L-nilpotent families of polynomially compact operators.

\begin {theorem} \label{family_polynomially_compact}
Let $\mathcal F$ be a L-nilpotent family of polynomially compact operators on a complex Banach space $X$.
Then the family $\mathcal F$ is triangularizable.
\end {theorem}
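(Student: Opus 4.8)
The plan is to reduce the theorem to the commutative case handled by Corollary \ref{commutative} by invoking the Triangularization Lemma (\cite[Lemma 7.1.11]{RadRos}), which says that a family $\cF$ is triangularizable provided it has the property that every ``quotient'' acting on the gap of a maximal subspace chain is triangularizable, and, more usefully here, that it suffices to produce, for every nonzero closed invariant-free situation, a nontrivial invariant subspace in a hereditary way. Concretely, by the Triangularization Lemma it is enough to show that every subfamily of $\cF$ that acts on a nonzero quotient space (a ``section'' of $X$) and is not already reduced has a common nontrivial closed invariant subspace; equivalently, it suffices to prove that an L-nilpotent family of polynomially compact operators on $X \neq \{0\}$ which has no common nontrivial closed invariant subspace must consist only of scalars, whence $X$ is one-dimensional. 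So the real content is an irreducibility argument.

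First I would dispose of the trivial reductions: if every operator in $\cF$ is scalar we are done, so assume some $T_0 \in \cF$ is nonscalar. Since $T_0$ is polynomially compact and nonscalar, Corollary \ref{Konvalinka} gives it a nontrivial hyperinvariant closed subspace; if this subspace happens to be invariant under all of $\cF$ we are finished, but in general it need not be. The key idea is instead to descend through the lower central series of the Lie algebra generated by $\cF$. Let $k$ be minimal with $\cF^{[k]} = \{0\}$; if $k = 1$ the family is commutative and Corollary \ref{commutative} applies directly. If $k \geq 2$, consider $\cF^{[k-1]}$: every element of it is a nonzero operator (for some choice, else $k$ were smaller) but it is centralized by $\cF$ in the sense that $[\,\cF^{[k-1]}, \cF\,] = \{0\}$, i.e. each $C \in \cF^{[k-1]}$ commutes with every $B \in \cF$. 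Moreover each such $C$ is a polynomial expression (an iterated commutator) in polynomially compact operators. The crucial observation is that an iterated commutator of length $\geq 1$ of polynomially compact operators is compact: the canonical map into the Calkin algebra sends each operator of $\cF$ to an algebraic element, and by the Kleinecke--Shirokov / Jacobson-type results (or directly, since the Calkin algebra is a Banach algebra and we can pass to a finite-dimensional subalgebra for algebraic elements) any nonzero iterated commutator of algebraic elements generating an L-nilpotent Lie algebra is nilpotent — but in fact we only need that the \emph{image} of every element of $\cF^{[1]}$ in the Calkin algebra lies in a nilpotent ideal; one checks the image of $\cF^{[k-1]}$ is $\{0\}$ in the Calkin algebra by an induction using that the image family is a finite-dimensional L-nilpotent Lie algebra of algebraic elements, hence nilpotent, hence its top central term maps to nilpotents commuting with everything, which in a suitable quotient forces them to vanish. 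Consequently every $C \in \cF^{[k-1]}$ is compact.

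Now I would run the following argument. Pick a nonzero $C_0 \in \cF^{[k-1]}$; it is compact and commutes with every operator in $\cF$. If $C_0$ is nonscalar, Lomonosov's Theorem \ref{Lomonosov} applied to any nonscalar polynomially compact operator $T_0 \in \cF$ — which commutes with the nonzero compact $C_0$ — yields a nontrivial hyperinvariant closed subspace for $T_0$; but ``hyperinvariant for $T_0$'' does not immediately give invariance under all of $\cF$. The cleaner route is: the nonzero compact operator $C_0$ itself, if nonscalar, has by Lomonosov's theorem (applied to $C_0$ as a nonscalar operator commuting with the nonzero compact operator $C_0$) a nontrivial hyperinvariant closed subspace $M$; since every $B \in \cF$ commutes with $C_0$, $M$ is invariant under all of $\cF$, and $\cF$ is reducible. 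If instead $C_0 = \lambda I$ with $\lambda \neq 0$, then since $C_0$ is compact the space $X$ must be finite-dimensional, and on a finite-dimensional space an L-nilpotent family generates a Lie algebra whose associative envelope has the top term of the lower central series landing in its Jacobson radical; by Levitzki/Engel-type arguments (or by Jacobson's lemma, as cited in the Introduction) the family is then simultaneously triangularizable by the classical finite-dimensional theory. Assembling these cases and feeding the resulting reducibility statement — ``every L-nilpotent family of polynomially compact operators on a space of dimension $> 1$ is reducible'' — into the Triangularization Lemma yields triangularizability of $\cF$.

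I expect the main obstacle to be the precise bookkeeping in showing that elements of $\cF^{[k-1]}$ are compact and nonzero-or-scalar in a way robust enough to pass to every section of $X$ occurring in the maximal chain (since the Triangularization Lemma requires the reducibility property to be hereditary under passing to invariant subspaces and their quotients). The subtlety is that polynomial compactness and L-nilpotency are both inherited by restrictions and quotients, so the induction goes through, but one must be careful that the minimal $k$ can only decrease on sections and that ``$C_0$ scalar'' genuinely forces finite-dimensionality of that section. A secondary technical point is the finite-dimensional endgame: one should cite that a finite-dimensional associative algebra whose generators form an L-nilpotent Lie algebra has the lower central series eventually inside the radical — this is essentially Jacobson's lemma combined with the fact that the radical of a finite-dimensional algebra is the maximal nilpotent ideal — and then invoke the classical triangularizability of a family whose commutators are radical. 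Handling these two points carefully is where the real work lies; the Lomonosov input is then immediate.
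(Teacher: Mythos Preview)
Your overall strategy --- pass to the last nonzero term $\cF^{[k-1]}$, pick $C_0=[A,B]\neq 0$ there, and exploit that $C_0$ commutes with all of $\cF$ --- is exactly the paper's strategy. The genuine gap is the sentence ``Consequently every $C\in\cF^{[k-1]}$ is compact.'' This is false. Take $X=\ell^2\oplus\ell^2\oplus\ell^2$ and
\[
A=\begin{pmatrix}0&I&0\\0&0&0\\0&0&0\end{pmatrix},\qquad
B=\begin{pmatrix}0&0&0\\0&0&I\\0&0&0\end{pmatrix}.
\]
Both are nilpotent (hence polynomially compact), $\{A,B\}^{[2]}=\{0\}$, but $[A,B]$ is the block $E_{13}$ with an identity in the corner, which is not compact. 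Your Calkin-algebra reasoning only yields (via Jacobson's lemma applied to the algebraic element $\pi(B)$) that $\pi(C_0)$ is \emph{nilpotent}, hence $C_0$ is polynomially compact; it does not give $\pi(C_0)=0$. Since both branches of your argument (Lomonosov applied with $C_0$ as the compact operator, and ``$C_0=\lambda I$ compact $\Rightarrow$ $\dim X<\infty$'') rest on compactness of $C_0$, the proof as written does not go through.

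The paper repairs this in two moves. First, Wielandt's theorem (the identity is never a commutator in $\mathcal B(X)$) kills the possibility that $C_0$ is a nonzero scalar, so your finite-dimensional detour is unnecessary. Second, instead of claiming $C_0$ is compact, the paper splits on whether every member of $\cF$ is algebraic: if some $C\in\cF$ is not algebraic, then $p(C)$ is a \emph{nonzero compact} operator commuting with the nonscalar $C_0$, and Lomonosov gives a hyperinvariant subspace for $C_0$, which is $\cF$-invariant; if all members of $\cF$ are algebraic, Jacobson's lemma (applied in $\mathcal B(X)$, not in the Calkin algebra) makes $C_0$ genuinely nilpotent, and $\ker C_0$ is the desired invariant subspace. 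Equivalently, you could salvage your line by observing that $C_0$ is polynomially compact and nonscalar and then invoking Corollary~\ref{Konvalinka} directly for $C_0$; either way, the missing ingredients are Wielandt's theorem and the realization that you only get polynomial compactness, not compactness, of $C_0$.
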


\begin {proof}
By the Triangularization Lemma  it suffices to prove that the family $\mathcal F$ is reducible.
We may assume that $\mathcal F$ contains a nonscalar operator.  If ${\mathcal F}^{[1]}=\{0\}$, then
$\mathcal F$  is reducible, by Corollary \ref{commutative}. So, we may assume that
${\mathcal F}^{[k]}=\{0\}$ and ${\mathcal F}^{[k-1]} \neq \{0\}$ for some $k \geq 2$. Therefore, there exist operators
$A \in {\mathcal F}^{[k-2]}$ and  $B \in {\mathcal F}$ such that $[A, B] \neq 0$ and
every operator in $\mathcal F$ commutes with $[A, B]$. Furthermore, it follows from
Wielandt's theorem \cite{Wielandt} that $[A, B]$ is not a scalar operator.

Suppose first that there exists an operator $C$ in $\mathcal F$ such that $C$ is not algebraic. Then there exists a polynomial $p$ such that $p(C)$ is a nonzero compact operator, and the space $X$ is necessarily
infinite-dimensional.  Since $[[A, B], C] = 0$,  the compact operator $p(C)$ commutes with
the nonscalar operator $[A, B]$, and so $[A, B]$ has a nontrivial hyperinvariant subspace, by Theorem \ref{Lomonosov}. Since every operator in $\mathcal F$ commutes with $[A, B]$, it follows that the family $\mathcal F$  is reducible.

Suppose now that $\mathcal F$ consists of algebraic operators. In this case the commutator $[A, B]$
is nilpotent, by Jacobson's lemma (see, e.g., \cite{Bracic}). Since every operator in $\mathcal F$ commutes with $[A, B]$, the kernel $\ker([A, B])$ is a nontrivial closed subspace that is invariant under $\mathcal F$. This completes the proof.
\end {proof}

Triangularizable families of operators on finite-dimensional vector spaces have plenty of important and interesting properties. As an example, if $S$ and $T$ are simultaneously triangularizable, then their commutator $[S,T]$ is nilpotent. This is often proved by representing $S$ and $T$ by upper triangular matrices with respect to some basis of the underlying space. In general, in infinite-dimensional vector spaces we cannot expect the same idea to work. However, in the case of finite L-nilpotent families of algebraic operators, operators can be still represented by upper triangular finite operator matrices with scalar operators as their diagonal entries.

\begin {theorem}\label{algebraic operators}
Let $\mathcal F$ be a finite L-nilpotent family of algebraic operators on a complex vector space $V$,
and let $\mathcal A$ be the algebra generated by $\cF$.
Then there exists a finite direct decomposition of $V$ with respect to which every member of $\cA$ has
an upper triangular block form having scalar operators for diagonal blocks.
\end {theorem}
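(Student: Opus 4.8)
The plan is to deduce the statement from Wedderburn structure theory, the main preliminary step being to prove that $\mathcal{A}$ is finite-dimensional. We may assume that $\mathcal{A}$ is unital, replacing it if necessary by $\mathcal{A} + \mathbb{C} I$: the family $\mathcal{F} \cup \{I\}$ is again a finite L-nilpotent family of algebraic operators (since $I$ commutes with everything), and a decomposition that works for the larger algebra works a fortiori for $\mathcal{A}$. First I would pass to the Lie algebra $\mathcal{L}$ generated by $\mathcal{F}$. A standard argument with the Jacobi identity shows that the terms of the lower central series of $\mathcal{L}$ are spanned by iterated commutators of members of $\mathcal{F}$; hence $\mathcal{F}^{[k]} = \{0\}$ forces a term of this series to vanish, so that $\mathcal{L}$ is nilpotent, and, being also finitely generated, finite-dimensional. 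The crucial point is then that \emph{every element of $\mathcal{L}$ is algebraic}; equivalently, that the derived algebra $[\mathcal{L}, \mathcal{L}]$ consists of nilpotent operators. Granting this, I would fix a basis $Y_1, \dots, Y_N$ of $\mathcal{L}$ consisting of algebraic operators, arranged so that the span of $Y_j, \dots, Y_N$ is an ideal of $\mathcal{L}$ for every $j$ (possible because $\mathcal{L}$ is nilpotent), and run a Poincar\'e--Birkhoff--Witt style collecting argument: every product of the $Y_j$ reduces to a linear combination of ordered monomials $Y_1^{a_1} \cdots Y_N^{a_N}$, and each exponent $a_j$ can be bounded by the degree of the minimal polynomial of $Y_j$. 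Since only finitely many such monomials remain, $\mathcal{A}$ is finite-dimensional.

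With finite-dimensionality in hand, I would examine the Wedderburn decomposition. The Jacobson radical $J(\mathcal{A})$ is nilpotent, and $\mathcal{A}/J(\mathcal{A}) \cong \bigoplus_{i=1}^{t} M_{n_i}(\mathbb{C})$. For each $i$, composing the quotient map with the $i$-th projection yields a surjection of $\mathcal{A}$ onto $M_{n_i}(\mathbb{C})$, so the image of $\mathcal{F}$ generates $M_{n_i}(\mathbb{C})$. This image is an L-nilpotent family, so the Lie algebra it generates is a nilpotent --- in particular solvable --- Lie subalgebra of $M_{n_i}(\mathbb{C})$, which by Lie's theorem is simultaneously triangularizable. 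An associative algebra generated by simultaneously triangularizable matrices consists of upper triangular matrices, hence equals $M_{n_i}(\mathbb{C})$ only if $n_i = 1$. Therefore $\mathcal{A}/J(\mathcal{A}) \cong \mathbb{C}^{s}$ for some $s$; in particular the semisimple quotient of $\mathcal{A}$ is commutative.

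Now I would build the decomposition. By the Wedderburn--Malcev principal theorem there is a subalgebra $\mathcal{S} \subseteq \mathcal{A}$ with $\mathcal{A} = \mathcal{S} \oplus J(\mathcal{A})$ and $\mathcal{S} \cong \mathbb{C}^{s}$; write $\mathcal{S} = \mathbb{C} e_1 \oplus \cdots \oplus \mathbb{C} e_s$ with orthogonal idempotents $e_i$ and $e_1 + \cdots + e_s = I$, so that $V = e_1 V \oplus \cdots \oplus e_s V$. Let $m$ be the nilpotency index of $J(\mathcal{A})$ and consider the chain of $\mathcal{A}$-submodules $V \supseteq J(\mathcal{A})V \supseteq J(\mathcal{A})^2 V \supseteq \cdots \supseteq J(\mathcal{A})^m V = \{0\}$. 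Each quotient $J(\mathcal{A})^k V / J(\mathcal{A})^{k+1} V$ is annihilated by $J(\mathcal{A})$, hence is a module over the semisimple algebra $\mathcal{A}/J(\mathcal{A}) \cong \mathbb{C}^{s}$ and therefore semisimple; so for each $k$ one can choose an $\mathcal{S}$-invariant complement $U_k$ of $J(\mathcal{A})^{k+1}V$ in $J(\mathcal{A})^k V$, and then $U_k = \bigoplus_{i=1}^{s} e_i U_k$. This gives the finite direct decomposition $V = \bigoplus_{k=0}^{m-1} \bigoplus_{i=1}^{s} e_i U_k$. Order its summands $e_i U_k$ primarily by decreasing $k$, breaking ties arbitrarily. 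For $T \in \mathcal{A}$ write $T = s_T + z_T$ with $s_T \in \mathcal{S}$ and $z_T \in J(\mathcal{A})$: then $s_T$ acts on each $e_i U_k$ as the scalar equal to the $i$-th coordinate of the image of $T$ in $\mathbb{C}^{s}$, while $z_T$ maps $e_i U_k \subseteq J(\mathcal{A})^k V$ into $J(\mathcal{A})^{k+1}V$, i.e.\ into the span of the strictly earlier summands. Hence $T$ is block upper triangular with respect to this ordered decomposition and its diagonal blocks are scalar operators, which is precisely the asserted form, valid for every member of $\mathcal{A}$.

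The step I expect to be the main obstacle is the finite-dimensionality of $\mathcal{A}$, that is, the passage from ``$\mathcal{L}$ is a finite-dimensional nilpotent Lie algebra generated by algebraic operators'' to ``$[\mathcal{L}, \mathcal{L}]$ consists of nilpotent operators''. Here algebraicity must enter essentially --- recall Jacobson's observation that $[A,B] = I$ is impossible when $A$ is algebraic, which already shows that without it the derived algebra need not consist of nilpotent, or even algebraic, operators. A natural route is an induction on the nilpotency class of $\mathcal{L}$: kill the central top term of the lower central series by passing to the quotient of $\mathcal{A}$ by the two-sided ideal it generates --- this ideal is generated by central elements, hence nilpotent once those elements are known to be nilpotent, which in turn follows from Jacobson's lemma because they are commutators $[A,W]$ with $[A,[A,W]] = 0$ --- then apply the inductive hypothesis to the quotient and lift back. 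Making this bookkeeping precise, in particular ensuring at each stage that the ``inner'' factor $W$ is already known to be algebraic, is the technical heart of the argument; once it is available, the remaining steps are standard structure theory.
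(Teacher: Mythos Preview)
Your approach is correct and genuinely different from the paper's. The paper builds the block decomposition \emph{first}, by a direct double induction on the family $\cF$: in the commutative case, on $s(\cF)=\sum_{T\in\cF}\deg(\text{min.\ poly.\ of }T)$, splitting along generalized eigenspaces; in the general case, on the pair $(k,\sum n([A,B]))$ (lexicographic), splitting along the range of a top--level commutator $[A,B]$, which is nilpotent by Jacobson's lemma. Only after the decomposition is in hand does the paper read off finite--dimensionality of $\cA$ and $\cA/J(\cA)\cong\mathbb C^n$ as a corollary. You reverse this order: first prove $\cA$ is finite--dimensional, then invoke Wedderburn, Lie's theorem, and Wedderburn--Malcev to manufacture the flag. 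The paper even remarks, at the end of the corollary following this theorem, that Wedderburn theory could alternatively be used --- so your route is in the spirit of that remark.

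Two comments on your execution. First, your ``main obstacle'' is correctly identified, and the induction you sketch does go through: the last nonzero term $\cL^{c}$ is spanned by left--nested brackets $[\,[\cdots[X_1,X_2],\cdots],X_c\,]$ with $X_c\in\cF$ algebraic, and since $\cL^{c}$ is central one has $[X_c,[X_c,\,\cdot\,]]=0$, so Jacobson's lemma makes each such bracket nilpotent; as these are central, the two--sided ideal $\cI$ they generate is nilpotent, and in $\cA/\cI$ the images of $\cF$ form an L--nilpotent family of lower class. Note, however, that this very induction already yields $\dim\cA<\infty$ directly --- $\cA/\cI$ is finite--dimensional by induction, each $\cI^j/\cI^{j+1}$ is a finitely generated module over it, and $\cI$ is nilpotent --- so the PBW collecting step is not actually needed. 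Second, a small slip: in your application of Jacobson's lemma it is the \emph{outer} factor (the element of $\cF$) that must be algebraic, not the inner one.

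What each approach buys: the paper's argument is elementary and self--contained (no Lie's theorem, no Wedderburn--Malcev), and the decomposition is produced constructively. Yours is more structural, produces the algebraic consequences $\dim\cA<\infty$ and $\cA/J(\cA)\cong\mathbb C^s$ as an integral part of the proof rather than as an afterthought, and generalises naturally to abstract algebras (nothing in your finite--dimensionality argument uses that $\cA$ acts on $V$).
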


\begin {proof}
Consider first the case when  ${\mathcal F}^{[1]}=\{0\}$.
Let us denote by $d(T)$ the degree of the minimal polynomial of an algebraic operator $T$.
We will use the induction on the sum $s(\cF) := \sum_{T \in \cF} d(T)$.
If  $\cF$ consists of scalar operators, i.e., $s(\cF)$ is equal to the cardinality of $\cF$,
then the conclusion of the theorem clearly holds. So, assume that there is a nonscalar operator $A \in \cF$.

Assume first that the minimal polynomial of $A$ is of the form $(\lambda - \lambda_0)^m$,
where $\lambda_0$ is the only eigenvalue of $A$ and $m \ge 2$. Then $V_1 := \ker (A-\lambda_0)^{m-1}$ is
a nontrivial subspace of $V$ that is invariant under every member of $\cA$. Choosing any vector space complement $V_2$ of $V_1$, every member of $\cA$ has an upper triangular $2 \times 2$ block form, with respect to the decomposition $V = V_1 \oplus V_2$. Furthermore, the minimal polynomial of the restriction of $A$ to $V_1$ is equal to
$(\lambda - \lambda_0)^{m-1}$, and the range of the operator $A-\lambda_0$ is contained in $V_1$, so that
the compression of $A$ to $V_2$  is a scalar operator.
If,  for each $i=1,2$, we consider the family $\cF|_{V_i}$ consisting from all compressions of members of $\cF$ to $V_i$, then we have $s(\cF|_{V_i}) < s(\cF)$, so that we can use the induction hypothesis to $\cF|_{V_i}$, and consequently we obtain the conclusion of the theorem for $\cF$.

Assume now that $A$ has at least $2$ eigenvalues, so that  the minimal polynomial of $A$ is of the form
$(\lambda - \lambda_1)^m \, q(\lambda)$, where $\lambda_1$ is an eigenvalue of $A$, $m \ge 1$,
and $q(\lambda)$ is a nontrivial polynomial not having $\lambda_1$ as one of its zeros.
Then $V_1 := \ker (A-\lambda_1)^m$ is a nontrivial subspace of $V$ that is invariant under every member of $\cA$. Choosing any vector complement $V_2$ of $V_1$, every member of $\cA$ has an upper triangular
$2 \times 2$ block form, with respect to the decomposition $V = V_1 \oplus V_2$. Furthermore, the minimal polynomial of the restriction of $A$ to $V_1$ is equal to $(\lambda - \lambda_1)^m$, and the minimal polynomial of the compression of $A$ to $V_2$ divides the polynomial $q(\lambda)$. If,  for each $i=1,2$, we consider the family
 $\cF|_{V_i}$ consisting from all compressions of members of $\cF$ to $V_i$, then we also have
$s(\cF|_{V_i}) < s(\cF)$, and so we can use the induction hypothesis again to get the conclusion of the theorem for $\cF$. This completes the proof in the case ${\mathcal F}^{[1]}=\{0\}$.

Consider now the general case. To each family $\cF$ satisfying the assumptions of the theorem, we
associate a pair of two integers as follows. If ${\mathcal F}^{[1]}=\{0\}$, put $c(\cF) := (1, 1)$. Otherwise,
define $c(\cF) := (k, \sum n([A,B]))$, where $k \ge 2$ is determined by the conditions ${\mathcal F}^{[k]}=\{0\}$ and ${\mathcal F}^{[k-1]} \neq \{0\}$, and the sum runs over all $A \in {\mathcal F}^{[k-2]}$ and  $B \in {\mathcal F}$
with $[A,B] \neq 0$.  Note that, by Jacobson's lemma, the commutator $[A, B]$ (in the sum) is necessarily nilpotent, since $B$ commutes with it. We will prove the theorem by induction over the set of
all possible values of $c(\cF)$  that is a subset of ${\mathbb N} \times {\mathbb N}$, ordered lexicographically.

The base case of induction holds by the first part of the proof.
Assume therefore that $\cF$ is a family satisfying the assumptions of the theorem,
and we have ${\mathcal F}^{[k]}=\{0\}$ and ${\mathcal F}^{[k-1]} \neq \{0\}$ for some  $k \ge 2$.
Choose $A \in {\mathcal F}^{[k-2]}$ and  $B \in {\mathcal F}$ such that $[A,B] \neq 0$.
Then the image $V_1 := \ran([A, B])$ is a nontrivial subspace of $V$ that is invariant under every member of $\cA$. Choosing any vector space complement $V_2$ of $V_1$, every member of $\cA$ has an upper triangular
$2 \times 2$ block form, with respect to the decomposition $V = V_1 \oplus V_2$.
Furthermore, the nilpotency index of  the restriction of $[A,B]$ to $V_1$ is smaller than $n([A,B])$,
and the compression of $[A,B]$ to $V_2$ is a zero operator. It follows that, for given $i=1,2$,
the family $\cF|_{V_i}$ consisting from all compressions of members of $\cF$ to $V_i$ has the property that
$c(\cF|_{V_i}) < c(\cF)$. Therefore,  we can use the induction hypothesis
to obtain the conclusion of the theorem for $\cF$. This completes the proof.
\end{proof}

As a consequence of Theorem \ref{algebraic operators},
we can establish the following algebraic properties of the algebra $\cA$.

\begin {corollary}  \label{algebraic algebra}
Under the assumptions of Theorem \ref{algebraic operators},
the algebra $\cA$ is a finite-dimensional algebra of algebraic operators,  its Jacobson radical $J(\mathcal A)$ is nilpotent,
and the quotient algebra $\mathcal A/J(\mathcal A)$ is isomorphic to $\mathbb C^n$ for some $n \in \mathbb N$.
\end {corollary}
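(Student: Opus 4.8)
The plan is to push everything through the block-triangular representation furnished by Theorem~\ref{algebraic operators}. First fix a finite direct decomposition $V = V_1 \oplus \cdots \oplus V_m$ with respect to which every member of $\cA$ is block-upper-triangular with scalar diagonal blocks, and write $T = (T_{ij})_{1 \le i,j \le m}$ for the corresponding block matrix of an operator $T$, so that $T_{ij} = 0$ for $i > j$ and $T_{ii} = \lambda_i(T)\, I_{V_i}$ for uniquely determined scalars $\lambda_i(T)$. The key observation is that the diagonal-extraction map $\Phi \colon \cA \to \mathbb{C}^m$, $\Phi(T) = (\lambda_1(T), \ldots, \lambda_m(T))$, is an algebra homomorphism, because for block-upper-triangular operators the diagonal blocks of a product are the products of the diagonal blocks. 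Its image is a subalgebra of $\mathbb{C}^m$, hence a finite-dimensional commutative algebra with no nonzero nilpotents; such an algebra is semisimple, so it is isomorphic to $\mathbb{C}^n$ for some $n$. Its kernel $N := \ker \Phi$ consists precisely of the strictly block-upper-triangular members of $\cA$, and a product of $m$ strictly block-upper-triangular operators vanishes, so $N$ is a nilpotent ideal with $N^m = 0$.

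Granting this, the statements about the radical are immediate: a nilpotent ideal lies in the Jacobson radical, so $N \subseteq J(\cA)$, while $\cA / N \cong \Phi(\cA) \cong \mathbb{C}^n$ is semisimple, which forces $J(\cA) \subseteq N$. Hence $J(\cA) = N$ is nilpotent and $\cA / J(\cA) \cong \mathbb{C}^n$.

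It remains to prove that $\cA$ is finite-dimensional; this is the only point that requires real work, since Theorem~\ref{algebraic operators} does not guarantee that the subspaces $V_i$ are themselves finite-dimensional. Since an operator in $\cA$ is determined by the blocks $T_{ij}$, and the diagonal blocks range over the one-dimensional spaces $\mathbb{C}\, I_{V_i}$, it suffices to show that for each pair $i < j$ the space $\{ T_{ij} : T \in \cA \}$ of $(i,j)$-blocks is finite-dimensional; then $\cA$ embeds linearly into a finite direct sum of finite-dimensional spaces. To this end I would use that $\cA$ is spanned by products $T^{(1)} \cdots T^{(l)}$ of members of the finite family $\cF$, whose $(i,j)$-block is a sum, over chains $i = k_0 \le k_1 \le \cdots \le k_l = j$, of composites $T^{(1)}_{k_0 k_1} \cdots T^{(l)}_{k_{l-1} k_l}$. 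In each such term the factors indexed by $s$ with $k_{s-1} = k_s$ are scalar operators and may be pulled out, so the term equals a scalar times a composite of at most $j - i \le m-1$ off-diagonal blocks of members of $\cF$; since there are only finitely many off-diagonal blocks of members of $\cF$, there are only finitely many such composites, and therefore $\{ T_{ij} : T \in \cA \}$ lies in the span of a finite set. Thus $\cA$ is finite-dimensional, the powers of any element of $\cA$ span a finite-dimensional space, so every element of $\cA$ is algebraic, completing the proof. I expect this last step — the bookkeeping that verifies that the scalar diagonal blocks really do make every block of an arbitrary product collapse to a scalar times a bounded-length product of off-diagonal ``building blocks'' coming from the finite generating family — to be the main obstacle; the semisimplicity and radical computations above are routine once the block form is in place.
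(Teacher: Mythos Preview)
Your proposal is correct and follows essentially the same route as the paper: both use the block-triangular form from Theorem~\ref{algebraic operators}, obtain finite-dimensionality by observing that any product of generators collapses to a linear combination of products involving at most $m-1$ off-diagonal ``building blocks'' drawn from the finite family $\cF$ (the paper phrases this as $\cA$ lying in the span of finitely many words $P_{i_1} N_{j_1} P_{i_2} \cdots N_{j_k} P_{i_{k+1}}$), and then identify the diagonal map to $\mathbb{C}^m$ as the homomorphism whose kernel is the nilpotent radical. Your worry about the ``bookkeeping'' step is unfounded---the argument you sketch is exactly the right one and goes through without difficulty.
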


\begin {proof}
By Theorem \ref{algebraic operators}, there is a direct decomposition $V = V_1 \oplus \cdots \oplus V_m$
with respect to which every member of $\cA$ has an upper triangular block form
having scalar operators for diagonal blocks. For $i=1, \ldots, m$, we denote by $P_i$ the projector on $V_i$.
If $\cF = \{A_1, \ldots, A_r\}$ then, for each  $j=1, \ldots, r$, there are uniquely determined complex numbers
$\lambda_{1,j}, \ldots, \lambda_{m,j}$ such that $A_j = \sum_{i=1}^m \lambda_{i,j} P_i + N_j$, where
$N_j$ is a nilpotent operator having strictly upper triangular block form, with respect to the decomposition above.
Then the algebra $\cA$ is contained in the linear span of the set
$$ \{ P_{i_1} N_{j_1} P_{i_2} N_{j_2}P_{i_3} N_{j_3} \cdots P_{i_{k}} N_{j_{k}} P_{i_{k+1}} , k=0, 1, 2, \ldots, m,
1 \le i_s \le m, 1 \le j_s \le r \} . $$
It follows that the algebra $\cA$ is finite-dimensional, and so its members are algebraic operators.
The Jacobson radical $J(\mathcal A)$ is exactly the set of all nilpotent operators from $\cA$.
Then it is not difficult to show that the homomorphism $p : \cA \rightarrow \mathbb C^m$ defined for generators
by $p(A_j) = (\lambda_{1,j}, \lambda_{2,j}, \ldots, \lambda_{m,j})$ gives an isomorphism
of $\mathcal A/J(\mathcal A)$ and $\mathbb C^n$ for some $n \le m$.
We complete the proof with a comment that the classical Wedderburn theory could be also used here.
\end {proof}

Guinand \cite{Guinand82} constructed a pair $\{A,B\}$ of weighted shifts on the Hilbert space $l^2$ such that the semigroup generated by them consists of nilpotent operators of nilpotency index $3$. Moreover, the sum $A+B$ is equal to the forward shift, so that, in particular, the algebra generated by $A$ and $B$ is infinite-dimensional. This example shows that an algebra generated by a finite triangularizable family of algebraic operators is not necessarily finite-dimensional.

We proceed by exploring algebraic properties of locally L-nilpotent families of algebraic operators. Hadwin et al. \cite{HNRRR86} constructed a locally nilpotent non-nilpotent associative algebra $\mathcal A$ of nilpotent operators on a separable Hilbert space that is not reducible. This example shows that Theorem \ref{family_polynomially_compact}, in general, does not hold for locally L-nilpotent families of polynomially compact operators. However, as an application of Hadwin's result \cite{Hadwin} the algebra $\mathcal A$ is algebraically triangularizable. We prove next that every locally L-nilpotent family of algebraic operators on a complex vector space is algebraically triangularizable.

\begin {corollary}\label{lokalno nilpotentna Lmnozica}
Let $\mathcal F$ be a locally L-nilpotent family of algebraic operators on a complex vector space,
and let $\mathcal A$ be the algebra generated by $\mathcal F$.
Then the following statements hold for the algebra $\mathcal A$.
\begin {enumerate}
\item [(a)] $\mathcal A$ is locally finite.
\item [(b)] $\mathcal A/J(\mathcal A)$ is commutative.
\item [(c)] $J(\mathcal A)=\{T\in \mathcal A:\, T \textrm{ is nilpotent}\}.$
\item [(d)] $\mathcal A$ is algebraically triangularizable.
\end {enumerate}
\end {corollary}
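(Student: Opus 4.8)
The plan is to derive all four statements from Corollary~\ref{algebraic algebra}, which already treats finite subfamilies, via the following mechanism used repeatedly: any finite subset of $\mathcal{A}$ lies in the subalgebra $\mathcal{A}_0$ generated by some finite subfamily $\mathcal{F}_0\subseteq\mathcal{F}$, and since $\mathcal{F}$ is locally L-nilpotent and algebraic, $\mathcal{F}_0$ is a finite L-nilpotent family of algebraic operators; hence, by Corollary~\ref{algebraic algebra}, $\mathcal{A}_0$ is finite-dimensional, its radical $J(\mathcal{A}_0)$ coincides with the set of nilpotent elements of $\mathcal{A}_0$ and is a two-sided ideal, and $\mathcal{A}_0/J(\mathcal{A}_0)\cong\mathbb{C}^{n}$ is commutative. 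Statement (a) is then immediate, since a finitely generated subalgebra of $\mathcal{A}$ is contained in such an $\mathcal{A}_0$.

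For (b) and (c), let $\mathcal{N}$ be the set of nilpotent elements of $\mathcal{A}$. First I would check that $\mathcal{N}$ is a two-sided ideal: any two elements of $\mathcal{N}$, or one element of $\mathcal{N}$ together with a prospective factor from $\mathcal{A}$, lie in a common finite-dimensional $\mathcal{A}_0$, where the nilpotent elements form the ideal $J(\mathcal{A}_0)$, so closure of $\mathcal{N}$ under sums and under multiplication by arbitrary elements of $\mathcal{A}$ follows. Being a nil ideal, $\mathcal{N}\subseteq J(\mathcal{A})$. Next, for $S,T\in\mathcal{A}$, embedding $\{S,T\}$ in an $\mathcal{A}_0$ and using that $\mathcal{A}_0/J(\mathcal{A}_0)$ is commutative gives $[S,T]\in J(\mathcal{A}_0)\subseteq\mathcal{N}\subseteq J(\mathcal{A})$; hence $\mathcal{A}/J(\mathcal{A})$ is commutative, which is (b). For (c) it remains to prove $J(\mathcal{A})\subseteq\mathcal{N}$, and here I would pass to $\mathcal{A}/\mathcal{N}$, which is commutative, has no nonzero nilpotents, and is locally finite by (a). A finitely generated subalgebra of $\mathcal{A}/\mathcal{N}$ is therefore a finite-dimensional, commutative, reduced $\mathbb{C}$-algebra, so it is isomorphic to some $\mathbb{C}^{k}$; consequently, if $0\neq\bar{x}\in J(\mathcal{A}/\mathcal{N})$, then, expressing $\bar{x}$ in the coordinates of such a subalgebra containing it, some coordinate projection $e$ satisfies $e\bar{x}=ce$ with $c\neq 0$, whence $e=c^{-1}e\bar{x}\in J(\mathcal{A}/\mathcal{N})$ --- impossible, as the radical contains no nonzero idempotent. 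Thus $J(\mathcal{A}/\mathcal{N})=\{0\}$; as the quotient map carries $J(\mathcal{A})$ into $J(\mathcal{A}/\mathcal{N})$, we get $J(\mathcal{A})\subseteq\mathcal{N}$, completing (c).

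For (d) I would invoke the Triangularization Lemma. The property of being a locally L-nilpotent family of algebraic operators is inherited by the families induced on quotients by invariant subspaces (algebraicity passes to restrictions and compressions, and L-nilpotence of a finite subfamily survives because commutators are compatible with these operations), so it suffices to show that every such family acting on a space $W$ with $\dim W\geq 2$ is reducible. If it were not, then, writing $\mathcal{B}$ for the generated algebra (the case $\mathcal{B}W=0$ giving reducibility trivially), $W$ would be a simple $\mathcal{B}$-module, so $J(\mathcal{B})$ annihilates $W$; by (b) applied to the induced family, $\mathcal{B}/J(\mathcal{B})$ is commutative, so $\mathcal{B}$ acts on $W$ through a commutative algebra. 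Adjoining the identity operator, $W$ becomes a simple module over a commutative unital $\mathbb{C}$-algebra each of whose elements acts as an algebraic operator (by (a), every element of $\mathcal{B}$ is algebraic). Hence $W$ is isomorphic as a module to a field that is an algebraic extension of $\mathbb{C}$, i.e.\ to $\mathbb{C}$ itself, so $\dim_{\mathbb{C}}W=1$, a contradiction.

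I expect the reducibility step in (d) to be the main obstacle, since the simple module $W$ may be infinite-dimensional and Burnside's theorem is therefore unavailable; one has to argue purely module-theoretically that a commutative algebra of algebraic operators acts irreducibly only on a one-dimensional space. The analogous point of care arises in (c), where local finiteness is precisely what forces the reduced commutative quotient $\mathcal{A}/\mathcal{N}$ to be a directed union of copies of $\mathbb{C}^{k}$, hence to have trivial Jacobson radical.
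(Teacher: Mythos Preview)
Your argument is correct. Parts (a) and (b) follow the paper's proof almost verbatim: both reduce to a finite-dimensional subalgebra $\mathcal{A}_0$ via Corollary~\ref{algebraic algebra} and read off the desired conclusions there. (The paper works with commutators of generators $A,B\in\mathcal{F}$ and shows the ideal they generate is nil, whereas you first check that the set $\mathcal{N}$ of nilpotents is a two-sided ideal and then handle arbitrary $[S,T]$ directly; the underlying mechanism is the same.)

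For (c) and (d) your route is genuinely different. The paper simply invokes Hadwin's theorem \cite[Theorem~2.4]{Hadwin}, which takes as input an algebra of algebraic operators with commutative quotient modulo its radical and returns both the identification $J(\mathcal{A})=\{\text{nilpotents}\}$ and algebraic triangularizability. You instead give self-contained arguments: for (c) you exploit local finiteness to realize $\mathcal{A}/\mathcal{N}$ as a directed union of algebras isomorphic to $\mathbb{C}^k$, so that its Jacobson radical vanishes; for (d) you run the Triangularization Lemma together with the Schur-type observation that a commutative algebra of algebraic operators can act irreducibly only on a one-dimensional $\mathbb{C}$-space (since a simple module over a commutative unital ring is a residue field, here an algebraic extension of $\mathbb{C}$). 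The paper's proof is shorter because it outsources this work to \cite{Hadwin}; your approach is more transparent and, in effect, reproves the relevant special case of Hadwin's result from first principles.
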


\begin {proof}
To prove (a), let $A_1,\ldots,A_n$ be arbitrary operators from $\mathcal A$, and let $\mathcal F_0$ be a finite subset of $\mathcal F$ such that the associative algebra $\mathcal A_0$ generated by $\mathcal F_0$ contains $A_1,\ldots,A_n.$
Since the family $\mathcal F$ is locally L-nilpotent, the algebra $\mathcal A_0$ is finite-dimensional, by Corollary \ref{algebraic algebra}.

To prove (b), we must show that commutators of operators from $\mathcal A$ are contained in the radical $J(\mathcal A)$ of $\mathcal A$. In order to prove this, let us choose arbitrary operators $A$ and $B$ in $\mathcal F$, and an arbitrary operator $T$ from the ideal generated by $AB-BA$. Then there exist $\lambda\in\mathbb C$, $k\in\mathbb N$ and operators $C,D,E_1,\ldots,E_k,F_1,\ldots,F_k\in \mathcal A$ such that  $$T=\lambda(AB-BA)+C(AB-BA)+(AB-BA)D+\sum_{i=1}^k E_i(AB-BA)F_i.$$ Let $\mathcal F_0$ be a finite subset of $\mathcal F$ such that the algebra $\mathcal A_0$ generated by $\mathcal F_0$ contains operators $A,B,C,D,E_1,\ldots,E_k,F_1,\ldots,F_k.$ Since the algebra $\mathcal A_0/J(\mathcal A_0)$ is commutative and $J(\mathcal A_0)$ is a nilpotent ideal of $\mathcal A_0$ by Corollary \ref{algebraic algebra}, the operator $T$ is contained in $J(\mathcal A_0)$, and is therefore nilpotent. This implies that the ideal in $\mathcal A$ generated by the operator $AB-BA$ consists of nilpotent operators, which implies that  $AB-BA \in J(\mathcal A)$.

Since the algebra $\mathcal A$ is locally finite, it consists of algebraic operators, so that (b) together with  \cite[Theorem 2.4]{Hadwin} give (c) and (d).
\end {proof}

Although locally nilpotent families of polynomially compact operators are not necessarily triangularizable,
this is the case under the additional assumption that operators essentially commute, that is, their images in the Calkin algebra
commute.

\begin {corollary}\label{bistveno komutativna}
Locally L-nilpotent family $\mathcal F$ of essentially commuting polynomially compact operators on a complex Banach space $X$ is triangularizable. Furthermore, if $\mathcal A$ is the associative algebra generated by $\mathcal F$, then the Banach algebra $\overline{\mathcal A}/J(\overline{\mathcal A})$ is commutative.
\end {corollary}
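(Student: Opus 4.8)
The plan is to treat the two assertions in turn, in both cases reducing to \emph{finite} subfamilies, where Theorem~\ref{family_polynomially_compact} is available, and then combining this with two standard tools from the theory of operator algebras (see \cite{RadRos}): the Lomonosov-type structure of transitive algebras containing compact operators, and Ringrose's super-diagonal theory for triangularizing chains.

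For the triangularizability of $\mathcal F$ I would invoke the Triangularization Lemma, so it suffices to prove that $\mathcal F$ is reducible; the hypothesis ``locally L-nilpotent family of essentially commuting polynomially compact operators'' is readily seen to be inherited by restrictions to closed invariant subspaces and by the corresponding quotients, since polynomial compactness, essential commutativity and L-nilpotency of a finite subset are all preserved (a compact operator induces compact operators on an invariant subspace and on the quotient). If $X$ is finite-dimensional, then $\mathcal F$ is a locally L-nilpotent family of algebraic operators and reducibility follows from Corollary~\ref{lokalno nilpotentna Lmnozica}(d); so assume $\dim X=\infty$. If $\mathcal F$ is commutative, apply Corollary~\ref{commutative}. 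Otherwise, essential commutativity forces every commutator $[A,B]$ with $A,B\in\mathcal F$ to be compact, so $\mathcal K_0:=\mathcal A\cap\mathcal K(X)$ is a nonzero ideal of $\mathcal A$. I claim $\mathcal K_0$ is not transitive: a transitive algebra of operators containing a nonzero compact operator contains every finite-rank operator, in particular a subalgebra of $\mathcal B(X)$ isomorphic to the matrix algebra $M_2(\mathbb C)$ (acting on a two-dimensional subspace and zero on a complement); but such a subalgebra lies in the algebra $\mathcal A_0$ generated by some finite subset $\mathcal F_0\subseteq\mathcal F$, and $\mathcal A_0$ is triangularizable by Theorem~\ref{family_polynomially_compact} (finite subsets of $\mathcal F$ are L-nilpotent), which is impossible because every subfamily of a triangularizable family is triangularizable while this copy of $M_2(\mathbb C)$ is not. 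Hence $\mathcal K_0$ has a nontrivial closed invariant subspace $N$, and the usual ideal argument finishes: the smallest closed $\mathcal A$-invariant subspace $N'$ containing $N$ satisfies $\mathcal K_0N'\subseteq N$, so if $N'\ne X$ it is a nontrivial $\mathcal A$-invariant subspace, while if $N'=X$ then $\overline{\mathcal K_0X}\subseteq N$ is a nonzero proper closed $\mathcal A$-invariant subspace. Either way $\mathcal A$, and therefore $\mathcal F$, is reducible.

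For the claim that $\overline{\mathcal A}/J(\overline{\mathcal A})$ is commutative, fix a triangularizing chain $\mathcal C$ for $\mathcal F$; it also triangularizes $\overline{\mathcal A}$. Since $[\overline{\mathcal A},\overline{\mathcal A}]\subseteq\overline{[\mathcal A,\mathcal A]}\subseteq\mathcal K(X)$, the commutator ideal $\mathcal I$ of $\overline{\mathcal A}$ consists of compact operators; being generated by commutators, all of its diagonal coefficients relative to $\mathcal C$ vanish, because the diagonal map attached to $\mathcal C$ is an algebra homomorphism into a product of copies of $\mathbb C$ and hence kills commutators and the ideal they generate. By Ringrose's theorem on compact operators, a compact operator whose diagonal coefficients relative to a triangularizing chain all vanish has spectrum $\{0\}$, so every operator in $\mathcal I$ is quasinilpotent. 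An ideal of a Banach algebra that consists of quasinilpotent elements is contained in the Jacobson radical: for $a\in\mathcal I$ and $x\in\overline{\mathcal A}$ the element $xa\in\mathcal I$ is quasinilpotent, so $1-xa$ is invertible. Thus $\mathcal I\subseteq J(\overline{\mathcal A})$, in particular $[\overline{\mathcal A},\overline{\mathcal A}]\subseteq J(\overline{\mathcal A})$, i.e. $\overline{\mathcal A}/J(\overline{\mathcal A})$ is commutative.

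The steps I expect to demand the most care are precisely the two interfaces with the standard machinery. First, one needs the Lomonosov-type fact that a transitive algebra of operators containing a nonzero compact operator must contain all finite-rank operators; this is what converts Theorem~\ref{family_polynomially_compact} — a statement about \emph{finite} subfamilies — into reducibility of the whole family. Second, one must set up Ringrose's super-diagonal theory correctly when the triangularizing chain $\mathcal C$ has no ``consecutive'' members, so that the diagonal coefficient of an operator, and the vanishing of the diagonal on the commutator ideal, are obtained through the limiting procedure for continuous chains rather than gap by gap; this is routine but is the technical heart of the second assertion.
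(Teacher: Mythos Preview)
Your approach is genuinely different from the paper's, which is quite short and leans heavily on the existing literature. The paper first observes (via \cite[Lemma 1.6]{konva}) that $\pi(\mathcal A)$ is a commutative algebra of algebraic elements in the Calkin algebra, so $\mathcal A$ itself is an essentially commutative algebra of polynomially compact operators; then, since every pair $\{A,B\}\subseteq\mathcal A$ sits inside the algebra generated by a finite (hence L-nilpotent) subfamily $\mathcal F_0\subseteq\mathcal F$, Theorem~\ref{family_polynomially_compact} makes each such pair triangularizable, and \cite[Theorem 2.10]{konva} upgrades pairwise triangularizability to triangularizability of $\mathcal F$. The second assertion is dispatched in one line by citing \cite[Theorem 3.5]{KanLMA}. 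By contrast, you argue reducibility directly via the compact ideal $\mathcal K_0=\mathcal A\cap\mathcal K(X)$ and a Lomonosov-type dichotomy, and you prove commutativity of $\overline{\mathcal A}/J(\overline{\mathcal A})$ from scratch with Ringrose's diagonal theory. Your Ringrose argument for the second assertion is correct and is essentially what lies behind the cited result from \cite{KanLMA}; it has the merit of being self-contained.

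There is, however, a point in your reducibility argument that deserves scrutiny. You assert that a transitive algebra containing a nonzero compact operator \emph{contains every finite-rank operator}, and then locate a copy of $M_2(\mathbb C)$ inside $\mathcal K_0$ itself. The standard Lomonosov-type results guarantee that such an algebra contains \emph{some} nonzero finite-rank operator, and that its \emph{closure} (strong or norm, depending on the formulation) contains all finite-rank operators; getting an honest copy of $M_2(\mathbb C)$ inside the non-closed algebra $\mathcal K_0$ requires more than bare transitivity (one typically needs transitivity of the adjoint action on $X^*$ as well). Since your contradiction hinges on pulling that copy of $M_2$ back to a finitely generated subalgebra $\mathcal A_0$ of $\mathcal A$, working only in $\overline{\mathcal K_0}$ does not suffice. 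You flagged this step as delicate, and rightly so; as written it is the one place where the argument may not close without either a sharper structural lemma or a reroute (for instance, showing directly that every pair in $\mathcal K_0$ is triangularizable and then invoking a pairwise-to-global result for algebras of compact operators --- which is, in effect, what the paper does one level up via \cite{konva}).
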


\begin {proof}
We first claim that the associative algebra $\mathcal A$ generated by $\mathcal F$ is essentially commutative algebra of polynomially compact operators. In order to prove this, let $\pi:\mathcal B(X)\to \mathcal B(X)/\mathcal K(X)$ be the canonical projection from $\mathcal B(X)$ to the Calkin algebra $\mathcal B(X)/\mathcal K(X)$. Since the algebra $\pi(\mathcal A)$ is generated by $\pi(\mathcal F)$, the algebra $\pi(\mathcal A)$ is commutative. By \cite[Lemma 1.6]{konva} the algebra $\pi(\mathcal A)$ consists of algebraic elements. Therefore, the algebra $\mathcal A$ is essentially commutative and consists of polynomially compact operators which proves the claim.

Choose arbitrary $A$ and $B\in \mathcal A$. Then there exists a finite subset $\mathcal F_0\subseteq \mathcal F$ such that $A$ and $B$ are contained in the associative algebra generated by $\mathcal F_0$. Since $\mathcal F$ is a locally L-nilpotent family
of polynomially compact operators, the set $\mathcal F_0$ is triangularizable by Theorem \ref{family_polynomially_compact}. From this we conclude that the pair $\{A,B\}$ is  triangularizable, so that the set $\mathcal F$ is triangularizable by \cite[Theorem 2.10]{konva} (see also \cite{KanLMA}). The remaining part of the corollary follows from \cite[Theorem 3.5]{KanLMA}.
\end {proof}

\section{Operators $A$ and $B$ commute with the product $AB$}

Theorem \ref{family_polynomially_compact} can be applied to the special case when $\mathcal F$ consists of two operators $A$ and $B$. This implies that whenever polynomially compact operators $A$ and $B$ on a Banach space satisfy
$[A,[A,B]]=[B,[A,B]]=0$, then $A$ and $B$ are simultaneously triangularizable.
Recently, Shemesh \cite{Shemesh} proved that complex $n\times n$ matrices $A$ and $B$ are simultaneously triangularizable whenever $A[A,B]=[A,B]B=0$, or equivalently,  $A$ and $B$ commute with their product $AB$.
We extend this result to polynomially compact operators.

\begin {theorem}\label{poly compact}
Let $A$ and $B$ be polynomially compact operators on a complex Banach space $X$.
If $A [A,B]=[A,B] B=0$, then $A$ and $B$ are simultaneously triangularizable.
\end {theorem}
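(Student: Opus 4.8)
The plan is to prove, via the Triangularization Lemma, that $\{A,B\}$ is reducible and then run the usual induction; this is legitimate because the hypothesis $A[A,B]=[A,B]B=0$ and polynomial compactness both pass to the restrictions of $A,B$ to a common invariant closed subspace and to the operators they induce on the quotient. If $A$ or $B$ is scalar, or if $K:=[A,B]=0$, then $\{A,B\}$ is commutative and Corollary \ref{commutative} applies, so from now on $A,B$ are nonscalar and $K\neq 0$. The organizing point is that the two relations say exactly that $C:=AB$ commutes with $A$ and with $B$, hence $C$ is central in the algebra they generate; from the relations one reads off, by short inductions, the identities $\ran K\subseteq\ker A$, $KB=0$, $[A^n,B]=KA^{n-1}$, $(AB)^n=A^nB^n$ and $K^{n+1}=C^nK$, which are the only computations needed. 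I then split into two cases according to whether both $A$ and $B$ are algebraic.

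If $A$ is not algebraic, its minimal polynomial (as a polynomially compact operator) yields a \emph{nonzero} compact operator $p(A)$, which commutes with $C$ because $A$ does (and symmetrically if $B$ is not algebraic). Here, if $C$ is nonscalar then by Theorem \ref{Lomonosov} the operator $C$ has a nontrivial hyperinvariant closed subspace, and it is invariant under $A$ and $B$ since both commute with $C$, so $\{A,B\}$ is reducible. If $C=\lambda I$ is scalar, I argue that $\lambda\neq 0$ is impossible: $AB=\lambda I$ makes the image of $A$ in the Calkin algebra right invertible, and that image is algebraic since $A$ is polynomially compact, so it is invertible, i.e.\ $A$ is Fredholm; a Fredholm polynomially compact operator has index $0$, and $A$ is surjective (again from $AB=\lambda I$), so $A$ is invertible and then $K=AB-BA=0$, a contradiction. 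Hence $\lambda=0$, i.e.\ $AB=0$, so $\ran B\subseteq\ker A$ and $\ker A$ is a closed subspace invariant under $A$ (trivially) and under $B$ (since $B(\ker A)\subseteq\ran B\subseteq\ker A$), nontrivial because $A\neq 0$ and $B\neq 0$.

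If instead $A$ and $B$ are both algebraic, I distinguish according to whether $A$ is nilpotent. If $A$ is not nilpotent, then since $AK=0$ and $K\neq 0$ the operator $A$ is not invertible, so the multiplicity $k_0\ge 1$ of $t$ in the minimal polynomial of $A$ is positive; the generalized kernel $N_0:=\ker A^{k_0}$ is a proper nonzero closed subspace (nontrivial precisely because $A$ is neither invertible nor nilpotent), it is $A$-invariant, and it is $B$-invariant because for $x\in N_0$ one computes $A^{k_0}Bx=BA^{k_0}x+KA^{k_0-1}x=KA^{k_0-1}x\in\ran K\subseteq\ker A$, whence $A^{k_0+1}Bx=0$ and $Bx\in\ker A^{k_0+1}=\ker A^{k_0}=N_0$. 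If $A$ is nilpotent, say $A^p=0$, then $(AB)^p=A^pB^p=0$ shows $C$ is nilpotent, so $K^{p+1}=C^pK=0$ and $K$ is a nonzero nilpotent operator; then $\ker A\cap\ker K$ is a closed subspace invariant under $A$ (trivially) and under $B$ (for $x$ in it, $ABx=Kx=0$ and $KBx=0$), it is proper since $A\neq 0$, and it is nonzero since $\ran K^{n(K)-1}$ is a nonzero subspace of $\ran K\cap\ker K\subseteq\ker A\cap\ker K$. In every case $\{A,B\}$ is reducible and the Triangularization Lemma finishes the argument.

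I expect the main obstacle to be the purely algebraic case: with no compact operator available, one must extract enough rigidity from the bare relations $A[A,B]=[A,B]B=0$, and the decisive ingredient is the (somewhat surprising) identity $(AB)^n=A^nB^n$, which transports nilpotency of $A$ first to $AB$ and then to $[A,B]$, together with the verification that $\ker A^{k_0}$ is $B$-invariant when $A$ is not nilpotent. A subsidiary technical point is excluding $AB=\lambda I$ with $\lambda\neq 0$, which rests on the fact that a Fredholm polynomially compact operator has index zero.
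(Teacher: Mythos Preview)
Your proof is correct, but it is organized along a different axis than the paper's and uses a genuinely different idea at the hardest point.

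The paper splits according to whether $C=AB$ is scalar. When $C$ is nonscalar it uses, as you do, Lomonosov (if a nonzero compact $p(A)$ or $p(B)$ is available) or, if both $A,B$ are algebraic, the identity $(AB)^n=A^nB^n$ together with a dimension count to show that $AB$ itself is algebraic, and then takes an eigenspace of $AB$ as the common invariant subspace. When $C=\lambda I$ with $\lambda\neq 0$, the paper normalizes to $AB=I$, observes $\ran B$ is closed and proper, and then produces an invariant subspace directly: an eigenspace $\ker(B-\mu I)$ if $B$ is algebraic, or, if $B$ is not algebraic, the subspace $\ran(B^{n_0})$ obtained via a finite-descent argument for $p(B)-p(0)\,I$ with $p$ the minimal polynomial of $B$.

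You instead split on whether both $A,B$ are algebraic. In the non-algebraic branch your treatment of $C$ nonscalar coincides with the paper's, but for $C=\lambda I$, $\lambda\neq 0$, you replace the paper's descent construction by a Fredholm index argument: $\pi(A)$ is right invertible and algebraic in the Calkin algebra, hence invertible (since $q(\pi(A))\pi(A)=\pi(A)q(\pi(A))=0$ would force $q(\pi(A))=0$), so $A$ is Fredholm; the essential spectrum of a polynomially compact operator is finite, so the Fredholm domain is connected and the index is $0$; since $A$ is surjective, $A$ is invertible and $K=0$, a contradiction. This is shorter and more conceptual than the paper's Case~2 argument. In the algebraic branch you bypass the paper's ``$AB$ is algebraic'' step entirely, working instead with $\ker A^{k_0}$ (using $[A^n,B]=KA^{n-1}$ and $\ker A^{k_0+1}=\ker A^{k_0}$) when $A$ is not nilpotent, and with $\ker A\cap\ker K$ when $A$ is nilpotent (using $K^{n+1}=C^nK$ to get $K$ nilpotent). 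These are correct and pleasantly concrete; the paper's route via eigenspaces of $AB$ is a bit more uniform but needs the extra observation that $AB$ is algebraic.

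Two small remarks. First, the step ``algebraic and one-sided invertible $\Rightarrow$ invertible'' in a ring is correct but worth one line of justification (as above). Second, the fact that a polynomially compact Fredholm operator has index $0$ is standard but not stated in the paper; a one-sentence reason (finite essential spectrum $\Rightarrow$ connected Fredholm domain $\Rightarrow$ constant index, equal to $0$ at infinity) would make your argument self-contained.
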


\begin {proof}
By the Triangularization Lemma, it suffices to prove reducibility of the pair $\{A,B\}$.
In view of Corollary  \ref{Konvalinka} we may assume that neither $A$ nor $B$ is a scalar operator and
that $AB \neq BA$.  We consider two cases.
\medskip \\
\noindent{\it Case 1:} The product $AB$ is not a scalar operator.

Assume first that $A$ and $B$ are algebraic operators with the minimal polynomials $p_A$ and $p_B$, respectively.
Since $A$ and $B$ commute with $A B$, we have $(A B)^n=A^n B^n$ for all $n \in \mathbb N$, and so
the set $\{(A B)^n :  n \in \mathbb N\}$ is contained in the finite-dimensional space spanned by
$\{A^j B^k : 0 \le j <  \deg p_A, 0 \le k <  \deg p_B\}$. It follows that the operator $AB$ is algebraic.
Since it is not a scalar operator, there exists an eigenspace of $AB$ that is a nontrivial closed subspace of $X$.
Since every eigenspace is hyperinvariant, it follows that the pair $\{A,B\}$ is reducible.

Assume now that at least one of the operators $A$ and $B$ is not algebraic. Then the space $X$ is
necessarily infinite-dimensional, and there is a nonzero compact operator $K$ obtained as a polynomial of
either $A$ and $B$ such that $K$ commutes with $A B$. Therefore, the operator $A B$ has
a nontrivial hyperinvariant closed subspace, by Theorem \ref{Lomonosov}. This proves that
the pair $\{A,B\}$ is reducible.
\medskip \\
\noindent{\it Case 2:} The product $AB$ is a scalar operator.

If $AB=0$, then the range of $B$ is not dense in $X$, and so its closure is a nontrivial closed subspace
invariant under both $A$ and $B$. Hence, we may assume that $AB \neq 0$.
Multiplying $A$ by a suitable complex number if necessary, we may assume further that  $AB= I$.
It is easy to see that the range $\ran B$ is a closed subspace.
If $\ran B = X$ then $B$ is invertible and $A=B^{-1}$, so that $A$ and $B$ commute, contradicting
the assumption from the beginning of the proof. Therefore, the closed subspace $\ran B$ is proper.

Assume first that the operator $B$ is algebraic. Since $AB=I$, $B$ cannot be nilpotent, so that there
exists an eigenvalue $\lambda  \neq 0$ such that the corresponding eigenspace
$\ker(B-\lambda I)$ is a proper closed subspace of $X$.  If $x \in \ker(B-\lambda I)$, then
$0 = A(B-\lambda I) x = x - \lambda Ax$, and so  $Ax = 1/\lambda \cdot x  \in \ker (B-\lambda I)$ which proves that $\ker(B-\lambda I)$ is  invariant under $A$. Since it is also invariant under $B$, the pair $\{A,B\}$ is reducible.

Assume now that the operator $B$ is not algebraic. Then the space $X$ is necessarily infinite-dimensional.
Let $p$ be the minimal polynomial of the polynomially compact operator $B$, so that $K := p(B)$ is a nonzero compact operator. Define a number $c$ and a polynomial $q$ by $c := p(0)$ and $q(z) := (p(z)-c)/z$.
We claim that $c \neq 0$. Assume otherwise that  $c=0$. Since $A B=I$, the operator $B$ is not
compact, and so $\deg p \ge 2$.
Since the operator $q(B)= A p(B)=A K$ is also compact, this contradicts the minimality of $p$,
and so the claim is proved.
Since the operator $K - c I$ has a finite descent, there exists a positive integer $m_0$ such that
$\ran((K-cI)^j)=\ran((K-cI)^{m_0})$ for all $j \geq m_0$.
Since $B^n q(B)^n = (B q(B))^n = (p(B)-cI)^n = (K-cI)^n$ for  every $n\in \mathbb N$, we have
$$ \ran (K-c I)^{m_0}=\ran (K-cI)^j \subseteq \ran (B^j) $$
for all $j\geq m_0$. Since $K$ is compact and $c\neq 0$, the quotient Banach space $X/\ran(K-cI)^{m_0}$ is finite-dimensional. Therefore, there exists a positive integer $n_0$ such that $\ran (B^j)=\ran (B^{n_0})$
for all $j\geq n_0$ and the codimension of $\ran (B^{n_0})$ is finite.
The subspace $\ran (B^{n_0})$ is obviously invariant under the operator $B$.
If $x\in\ran (B^{n_0})=\ran (B^{n_0+1})$, then there exists $y \in X$ such that $x=B^{n_0+1} y$, and so   $Ax=B^{n_0}y \in \ran (B^{n_0})$.  This shows that  $\ran (B^{n_0})$ is also invariant under $A$.
Since $\ran (B^{n_0})$ is a finite-codimensional subspace that is contained in a proper subspace $\ran B$,
it is a nontrivial closed subspace of $X$. This proves that the pair $\{A,B\}$ is reducible.
\end {proof}

Shemesh \cite{Shemesh} also proved that the pair $\{A,B\}$ of complex $n\times n$ matrices is triangularizable whenever $A[A,B]=B[A,B]=0$. We provide an extension of this result to polynomially compact operators. 

\begin {theorem}
Let $A$ and $B$ be polynomially compact operators on a complex Banach space $X$.
If $A [A,B]=B [A,B]=0$, then $A$ and $B$ are simultaneously triangularizable.
\end {theorem}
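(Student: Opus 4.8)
The plan is to follow the same top-level strategy as in the proof of Theorem \ref{poly compact}: by the Triangularization Lemma it suffices to prove that the pair $\{A,B\}$ is reducible, since both polynomial compactness and the identities $A[A,B]=0$ and $B[A,B]=0$ are inherited by the restrictions of $A$ and $B$ to a common closed invariant subspace and by the operators they induce on the corresponding quotient space. Put $C:=[A,B]$. If $C=0$, then $A$ and $B$ commute and the pair is triangularizable (in particular reducible) by Corollary \ref{commutative}, so from now on I assume $C\neq 0$.

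The key point is that the hypothesis says nothing more than $AC=0$ and $BC=0$, that is, $\ran C\subseteq\ker A\cap\ker B$. First I would set $M:=\overline{\ran C}$. Since $C\neq 0$ we have $M\neq\{0\}$. Next, $C\neq 0$ forces $A$ to be nonscalar (a scalar $A$ would give $[A,B]=0$), hence $A\neq 0$ and $\ker A$ is a proper closed subspace of $X$; since $M\subseteq\ker A$, the subspace $M$ is proper as well. Finally, from $A(\ran C)=\ran(AC)=\{0\}$ and $B(\ran C)=\ran(BC)=\{0\}$ together with the continuity of $A$ and $B$ we get $A(M)=B(M)=\{0\}\subseteq M$, so $M$ is a nontrivial closed subspace invariant under (indeed annihilated by) both $A$ and $B$. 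Thus $\{A,B\}$ is reducible, and the theorem follows.

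There is essentially no hard step here: in contrast with Theorem \ref{poly compact}, the present hypothesis is strong enough that the noncommuting case is settled outright by the closed subspace $\overline{\ran[A,B]}$, and polynomial compactness is used only (through Corollary \ref{commutative}) to dispose of the commuting case. The only point needing a word of justification is that the hypotheses pass to invariant subspaces and to quotients, so that the Triangularization Lemma applies; for the commutator identities this is immediate because passing to a restriction, or to an induced operator on a quotient, is an algebra homomorphism, and for polynomial compactness it is the standard fact that a compact operator induces compact operators on invariant subspaces and on quotients.
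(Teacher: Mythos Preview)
Your proof is correct and follows essentially the same approach as the paper: both arguments take $M=\overline{\ran[A,B]}$ as the desired nontrivial common invariant subspace, observing that the hypotheses $A[A,B]=B[A,B]=0$ force $A$ and $B$ to annihilate $M$. Your write-up is in fact more detailed than the paper's, which compresses the entire argument into a single sentence; in particular, your justification that $M$ is proper (via $M\subseteq\ker A$ with $A\neq 0$) and your remark on why the Triangularization Lemma applies are both spelled out more fully than in the original.
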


\begin {proof}
Similarly as in the proof of Theorem \ref{poly compact} we only need to prove reducibility of the pair $\{A,B\}$ of nonscalar noncommuting polynomially compact operators. In this case, the closure of the range of the operator $AB-BA$ is a nontrivial closed subspace of $X$ which is invariant under both $A$ and $B$.
\end {proof}

 So far, we considered operators acting on Banach spaces. When they act on Hilbert spaces, we can say more. Suppose that $A$ and $B$ are polynomially compact operators on a Hilbert space $\mathcal H$. If $A$ is normal and $[A,[A,B]]=0$, then  by Putnam's theorem \cite[Theorem III]{Putnam}  $A$ and $B$ commute, and so they are simultaneously triangularizable.
Similar conclusion (see Theorem \ref{normalnost}) holds whenever $A$ is normal and $A[A,B]=0$.
However, the following easy example shows that, in general, $A$ and $B$ do not commute.

\begin {example}
By a direct calculation one can verify that simultaneously triangularizable matrices
$$A=\left[\begin {array}{cc}
1&0\\
0&0
\end {array}\right] \qquad \textrm{and} \qquad
B=\left[\begin {array}{cc}
0 & 0\\
1 & 0
\end {array}\right]$$ satisfy $A[A,B]=0$, while
$$ AB-BA=\left[\begin {array}{cc}
0&0\\
-1&0
\end {array}\right].$$
\end {example}

\begin {theorem}\label{normalnost}
Let $A$ and $B$ be polynomially compact operators on a Hilbert space $\mathcal H$. Suppose that
$A$ is normal and that $A [A,B]=0$.
Then $A$ and $B$ are simultaneously triangularizable and $[A,B]^2=0$.
If $B$ is also normal, then $A$ and $B$ are simultaneously diagonalizable, and so they commute.
\end {theorem}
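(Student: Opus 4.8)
The plan is to pass to the orthogonal decomposition $\mathcal H = H_0 \oplus H_0^{\perp}$ determined by $H_0 = \ker A$. Since $A$ is normal, $H_0 = \ker A^{*} = (\ran A)^{\perp}$, so both summands reduce $A$ and $A = 0 \oplus A_1$ with $A_1$ injective (and normal, though only injectivity will be needed). Writing $B$ as a $2\times 2$ operator matrix $[B_{ij}]$ with respect to this decomposition and computing $A[A,B]$ blockwise, the hypothesis $A[A,B]=0$ forces $A_1^{2}B_{10}=0$ and $A_1[A_1,B_{11}]=0$; injectivity of $A_1$ then gives $B_{10}=0$ and $[A_1,B_{11}]=0$. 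Hence $[A,B]$ kills $H_0$ and maps $H_0^{\perp}$ into $H_0$ (its only nonzero block being $-B_{01}A_1$ from $H_0^{\perp}$ to $H_0$), so $[A,B]^{2}=0$. Moreover $B_{10}=0$ says exactly that $H_0$ is invariant under $B$, while it is trivially invariant under $A$; on $H_0$ the family $\{A,B\}$ acts as $\{0,B_{00}\}$, a commuting family of polynomially compact operators, and on $\mathcal H/H_0$ the induced operators are, after the natural identification with $H_0^{\perp}$, the polynomially compact operators $A_1$ and $B_{11}$, which commute since $[A_1,B_{11}]=0$. By Corollary~\ref{commutative} both the restriction to $H_0$ and the induced family on $\mathcal H/H_0$ are triangularizable, so $\{A,B\}$ is triangularizable by the Triangularization Lemma. (Only normality of $A$ and the relation $A[A,B]=0$ have been used so far.)

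Now assume in addition that $B$ is normal. Since $B_{10}=0$, the operator $B$ is block upper triangular, so comparing the $(1,1)$-entries of $B^{*}B$ and $BB^{*}$ gives $B_{00}^{*}B_{00}-B_{00}B_{00}^{*}=B_{01}B_{01}^{*}\ge 0$; in particular $B_{00}=B|_{H_0}$ is hyponormal. As $H_0$ need not reduce $B$, one cannot conclude outright that $B_{00}$ is normal, and the heart of the argument is the claim that \emph{a hyponormal polynomially compact operator is normal}. To prove the claim, let $T$ be such an operator and let $q$ be a nonzero polynomial with $q(T)$ compact. By the spectral mapping theorem $q(\sigma(T))=\sigma(q(T))$, and $\sigma(q(T))$ is countable because $q(T)$ is compact; since the preimage of a countable set under $q$ is countable (the finite-dimensional case being trivial), $\sigma(T)$ is countable and therefore has planar Lebesgue measure zero. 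Putnam's inequality, bounding $\|T^{*}T-TT^{*}\|$ by a constant multiple of the area of $\sigma(T)$, then forces $T^{*}T=TT^{*}$, i.e. $T$ is normal.

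Applying the claim to $T=B_{00}$ gives $B_{00}^{*}B_{00}=B_{00}B_{00}^{*}$, hence $B_{01}B_{01}^{*}=0$ and $B_{01}=0$; therefore $[A,B]=0$, so $A$ and $B$ are commuting normal polynomially compact operators. Finally, by the argument used for the claim, every normal polynomially compact operator has countable spectrum, so its spectral measure is purely atomic and it is diagonalizable; decomposing $\mathcal H$ into the mutually orthogonal eigenspaces of $A$, which also reduce $B$ because $B$ commutes with $A$ and hence, by Fuglede's theorem, with the spectral projections of $A$, and then splitting each such summand into eigenspaces of the normal polynomially compact operator that $B$ induces on it, exhibits an orthonormal basis of common eigenvectors. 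Thus $A$ and $B$ are simultaneously diagonalizable, and in particular commute. The one genuinely nonroutine point is the claim in the middle paragraph; the rest is the blockwise computation above together with Corollary~\ref{commutative} and standard spectral theory.
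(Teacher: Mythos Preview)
Your argument for the first assertion (simultaneous triangularizability and $[A,B]^2=0$) is essentially identical to the paper's: the same orthogonal decomposition $\mathcal H=\ker A\oplus(\ker A)^\perp$, the same blockwise computation using injectivity of $A$ on $(\ker A)^\perp$, and the same reduction to Corollary~\ref{commutative} on the two pieces. (A minor quibble: what you invoke at the end of that paragraph is not the Triangularization Lemma as stated in \cite{RadRos}, but the elementary splicing of a triangularizing chain on an invariant subspace with one on the quotient; this is routine and the paper uses it implicitly as well.)

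Where you genuinely diverge is in the step that forces $B_{01}=0$ when $B$ is normal. The paper appeals to the fact that a normal polynomially compact operator is \emph{completely normal} (every closed invariant subspace is reducing), citing \cite[Corollary~1.24]{INVRadRos}; since $\ker A$ is invariant under $B$, it reduces $B$ and the off-diagonal block vanishes in one stroke. You instead observe that $B_{00}=B|_{\ker A}$ is hyponormal, note that a polynomially compact operator has countable spectrum by the spectral mapping theorem, and apply Putnam's area inequality to conclude $B_{00}$ is normal, whence $B_{01}B_{01}^{*}=0$. Both routes are correct. The paper's is shorter if one is willing to quote the reductivity result from \cite{INVRadRos}; yours is more self-contained and has the pleasant by-product of isolating the lemma ``hyponormal $+$ polynomially compact $\Rightarrow$ normal'', which may be of independent interest. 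The final diagonalization argument is the same in substance, with your explicit appeal to Fuglede's theorem playing the role of the paper's citation of \cite[Proposition~1.10]{INVRadRos}.
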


\begin {proof}
Clearly, we may assume that $A$ and $B$ are not scalar operators.
If $A$ is injective, the assumption $A [A,B]=0$ implies that $[A,B]=0$, and so $A$ and $B$ are simultaneously triangularizable.
Therefore, we may assume that the kernel $\ker A$ is a nontrivial closed subspace of $\mathcal H$.
Since the operator $A$ is normal, $A$ and $B$ can be represented as
$$ A=\left[\begin {array}{cc}
0&0\\
0&A_{22}
\end {array}
\right]\qquad \textrm{and}\qquad
B=\left[\begin {array}{cc}
B_{11}&B_{12}\\
B_{21}&B_{22}
\end {array}
\right] , $$
with respect to the decomposition $\mathcal H=\ker A \oplus (\ker A)^\perp = \ker A \oplus \overline{\ran A}$.
Since $A^2 B = A B A$, we obtain that $A_{22}^2 B_{21} = 0$ and $A_{22} (A_{22} B_{22} - B_{22} A_{22}) = 0$,
and so  $B_{21} = 0$ and $A_{22} B_{22} = B_{22} A_{22}$, as $A_{22}$ is injective.
Now it is easy to see that we have $[A,B]^2=0$.
Since commuting polynomially compact operators $A_{22}$ and $B_{22}$ are  simultaneously triangularizable and the polynomially compact operator $B_{11}$ is  triangularizable, we conclude that $A$ and $B$ are simultaneously triangularizable.

Assume now that $B$ is also normal.
By \cite[Corollary 1.24, p.\,22]{INVRadRos}, it is also completely normal, i.e., every its invariant closed subspace $M$  is reducing (that is, $M^\perp$ is also invariant). It follows that $B_{12} = 0 $ in the above
representation of $B$. Therefore, $B$ is block diagonal, and so $A$ and $B$ commute.
By \cite[Proposition 1.10, p.\,24]{INVRadRos}, the operator $A$ is diagonalizable, that is, the set of its eigenvectors spans the space $\mathcal H$.  So, there exist $N>1$
(where we do not exclude the possibility $N=\infty$), pairwise different complex numbers $\{\lambda_k\}_{k=1}^N$ and pairwise orthogonal projections $\{P_k\}_{k=1}^N$ such that $A=\bigoplus\limits_{k=1}^N \lambda_k P_k$.
Since $AB=BA$, the range of each projection $P_k$ is a reducing subspace for the operator $B$.
Since the restriction of $B$ to the range of $P_k$ is polynomially compact and normal, it is diagonalizable by
\cite[Proposition 1.10, p.\,24]{INVRadRos}. From here it follows that $A$ and $B$ are simultaneously diagonalizable.
\end {proof}

We complete the paper by the dual version of  Theorem \ref{normalnost}.

\begin {theorem}
Let $A$ and $B$ be polynomially compact operators on a Hilbert space $\mathcal H$. Suppose that
$B$ is normal and that $[A,B]B=0$.
Then $A$ and $B$ are simultaneously triangularizable and $[A,B]^2=0$.
If $A$ is also normal, then $A$ and $B$ are simultaneously diagonalizable, and so they commute.
\end {theorem}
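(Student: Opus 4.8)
The plan is to reduce the statement to Theorem~\ref{normalnost} by passing to adjoints. First I would record that on a Hilbert space the adjoint of a compact operator is compact, so if $p$ is a nonzero polynomial with $p(A)$ compact, then $\bar p(A^*)=p(A)^*$ is compact and $\bar p\neq 0$; hence $A^*$ and $B^*$ are again polynomially compact. Next I would compute
$$\bigl([A,B]B\bigr)^* = (AB^2-BAB)^* = (B^*)^2A^*-B^*A^*B^* = B^*\bigl(B^*A^*-A^*B^*\bigr) = B^*[B^*,A^*],$$
so the hypothesis $[A,B]B=0$ is equivalent to $B^*[B^*,A^*]=0$. Setting $C:=B^*$ and $D:=A^*$, the operator $C$ is normal (since $B$ is normal), both $C$ and $D$ are polynomially compact, and $C[C,D]=0$; thus the pair $\{C,D\}$ satisfies the hypotheses of Theorem~\ref{normalnost}.

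By Theorem~\ref{normalnost}, the operators $C$ and $D$ are simultaneously triangularizable and $[C,D]^2=0$. To translate this back, note that $[C,D]=[B^*,A^*]=(AB)^*-(BA)^*=[A,B]^*$, hence $0=[C,D]^2=([A,B]^*)^2=([A,B]^2)^*$ and therefore $[A,B]^2=0$. For triangularizability, if $\mathcal C$ is a maximal chain of closed subspaces each invariant under $C=B^*$ and $D=A^*$, then $\{M^\perp:M\in\mathcal C\}$ is again a maximal subspace chain, and each $M^\perp$ is invariant under $A^{**}=A$ and $B^{**}=B$, since $M$ is invariant under $T^*$ if and only if $M^\perp$ is invariant under $T$; hence $A$ and $B$ are simultaneously triangularizable.

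Finally, if $A$ is also normal then $D=A^*$ is normal, so Theorem~\ref{normalnost} additionally yields that $C$ and $D$ are simultaneously diagonalizable, and in particular $CD=DC$, i.e. $(BA)^*=(AB)^*$, which gives $AB=BA$. Since the common eigenvectors of the normal operators $A,B$ coincide with those of $A^*,B^*$, simultaneous diagonalizability passes from $\{A^*,B^*\}$ back to $\{A,B\}$; equivalently, one applies the spectral theorem to the commuting normal pair $A,B$ directly, exactly as in the last paragraph of the proof of Theorem~\ref{normalnost}.

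I do not expect a genuine obstacle: the argument is a formal dualization, and the only routine points to check are that polynomial compactness, invariant subspaces, and simultaneous diagonalizability behave correctly under taking adjoints. If one prefers an argument staying inside the Hilbert-space picture, one can instead mirror the proof of Theorem~\ref{normalnost}: by normality of $B$ one has $\ker B=\ker B^*$ and $\overline{\ran B}=(\ker B)^\perp$, so with respect to $\mathcal H=\ker B\oplus\overline{\ran B}$ one writes $B=0\oplus B_{22}$ with $B_{22}$ injective and of dense range; the identity $AB^2=BAB$ then forces the $(1,2)$-block of $A$ to vanish and $A_{22}B_{22}=B_{22}A_{22}$, from which $[A,B]^2=0$ follows by a direct block computation, and triangularizability follows by applying Corollary~\ref{commutative} on $\overline{\ran B}$ and Corollary~\ref{Konvalinka} together with the Triangularization Lemma on the quotient $\mathcal H/\overline{\ran B}$.
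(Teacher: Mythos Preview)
Your argument is correct and follows exactly the paper's route: pass to adjoints, observe $B^*[B^*,A^*]=([A,B]B)^*=0$, apply Theorem~\ref{normalnost} to the pair $\{B^*,A^*\}$, and then transport both conclusions back via orthogonal complements of a triangularizing chain and the identity $[A,B]^*=[B^*,A^*]$. The only additions you make---checking that adjoints of polynomially compact operators remain polynomially compact, and sketching a direct block-decomposition alternative---are sound elaborations rather than a different method.
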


\begin {proof}
Since $B^*[B^*,A^*]=([A,B]B)^*=0$, the pair $\{A^*,B^*\}$ of polynomially compact operators is triangularizable and $[B^*,A^*]^2=0$,  by Theorem \ref{normalnost}. The latter implies that  $[A,B]^2=0$.
Let $\mathcal C$ be one of triangularizing chains for the pair $\{A^*,B^*\}$. Then the set $\mathcal C^\perp:=\{M^\perp:\; M\in \mathcal C\}$ is a chain of closed subspaces invariant under $A$ and $B$.
Since the mapping $N\mapsto N^\perp$ is an anti-isomorphism of the lattice of all closed subspaces of $\mathcal H$, the chain $\mathcal C^\perp$ is also a maximal chain of closed subspaces of  $\mathcal H$. This proves that $A$ and $B$ are simultaneously triangularizable.

If $A$ and $B$ are normal, then $A^*$ and $B^*$ are simultaneously diagonalizable, by Theorem \ref{normalnost}. The conclusion of the theorem now immediately follows.
\end {proof}

{\it Acknowledgments.} This work was supported in part by grant P1-0222 of Slovenian Research Agency.

\bigskip
		
\noindent
     Roman Drnov\v{s}ek, Marko Kandi\'{c} : \\
     Faculty of Mathematics and Physics \\
     University of Ljubljana \\
     Jadranska 19 \\
     1000 Ljubljana \\
     Slovenia \\[1mm]
     E-mails : roman.drnovsek@fmf.uni-lj.si, marko.kandic@fmf.uni-lj.si


\begin{thebibliography}{9999}


\bibitem{Bracic} J. Bra\v ci\v c, B. Kuzma, {\it Localizations of the Kleinecke-Shirokov Theorem}, Operators and Matrices {\bf 1} (2007), 385--389.

\bibitem{Bresar}
M. Bre\v sar, {\it Introduction to Noncommutative algebra}, Universitext,  Springer, Heidelberg, New York, Dordrecht, London, 2014.

\bibitem{Guinand82} P. S. Guinand, {\it On quasinilpotent semigroups of operators}, Proc. Amer. Math. Soc. {\bf 86} (1982), 485--486.

\bibitem{Hadwin} D. Hadwin, {\it Radjavi's trace condition for triangularizability},
Journal of Algebra, {\bf 109} (1987), 184--192.

\bibitem{HNRRR86} D. Hadwin, E. Nordgren, M. Radjabalipour, H. Radjavi and P. Rosenthal,
{\it A nil algebra of bounded operators on Hilbert space with semisimple norm closure},
Integral Equat. Oper. Th. {\bf 9} (1986), 739--743.

\bibitem{KanLMA} M. Kandi\' c, {\it On algebras of polynomially compact operators},
Linear and Multilinear Algebra {\bf 64} (2016), no. 6, 1185--1196,  DOI: 10.1080/03081087.2015.1077775

\bibitem{Kleinecke} D. C. Kleinecke, {\it On operator commutators},
Proc. Amer. Math. Soc. {\bf 8} (1957), 535--536.

\bibitem{konva} M. Konvalinka,
{\it Triangularizability of Polynomially Compact Operators}, Integral Equations Operator Theory {\bf 52} (2005), 271--284.

\bibitem{Lom}V. Lomonosov,
{\it Invariant subspaces for the family of operators commuting with compact operators},
Funkcional. Anal. i Prilo\v zen. {\bf 7} (1973), 55--56 (Russian);
Funct. Anal. and Appl. {\bf 7} (1973), 213--214 (English).

\bibitem{Murphy} G. J. Murphy, {\it Triangularizable algebras of compact operators}, Proc. Amer. Math. Soc. {\bf 84} (1982), 354--356.

\bibitem{Putnam} C. R. Putnam, {\it On normal operators in Hilbert spaces}, American Journal of Mathematics {\bf 73} (1971), 357--362.

\bibitem{INVRadRos} H. Radjavi, P. Rosenthal,  {\it Invariant subspaces}, Ergebnisse der Mathematik und ihrer Grenzgebiete, Band {\bf 77}. Springer-Verlag, New York-Heidelberg, 1973.

\bibitem{RadRos} H. Radjavi, P. Rosenthal,  {\it Simultaneous Triangularization},
Springer-Verlag, New York, 2000

\bibitem{Shapiro} H. Shapiro, {\it Commutators which commute with one factor}, Pacific Journal of Mathematics {\bf 181} (1997), 323--336.

\bibitem{Shemesh} D. Shemesh, {\it A simultaneous triangularization result}, Linear Algebra Appl. {\bf 498} (2016), 394--398, DOI:10.1016/j.laa.2015.09.044

\bibitem{Shirokov} F. V. Shirokov, {\it Proof of a conjecture of Kaplansky}, Uspekhi Mat. Nauk. {\bf 11} (1956), 167--168.

\bibitem{Wielandt} H. Wielandt, {\it Ueber die Unbeschr\"anktheit der Operatoren des Quantenmechanik}, Math. Ann. {\bf 121} (194) 21.


\end{thebibliography}
\end{document}